\theoremstyle{plain}
\newtheorem{theorem}{Theorem}
\newtheorem{corollary}{Corollary}
\newtheorem{lemma}{Lemma}
\newtheorem{proposition}{Proposition}
\theoremstyle{definition}
\newtheorem{example}{Example}
\newtheorem{remark}{Remark}
\newtheorem{assumption}{Assumption}
\numberwithin{theorem}{section}
\numberwithin{corollary}{section}
\numberwithin{lemma}{section}
\numberwithin{definition}{section}
\numberwithin{example}{section}
\numberwithin{remark}{section}
\numberwithin{proposition}{section}
\numberwithin{assumption}{section}
\definecolor{CB}{rgb}{0.1,0.5,0.1}
\definecolor{YK}{rgb}{0.1,0.2,0.7}
\definecolor{H}{rgb}{0.7,0.1,0.2}
\definecolor{MY}{rgb}{0.5,0,0.45}
\renewcommand{\Re}{\ensuremath{{\rm Re\,}}}
\renewcommand{\leq}{\leqslant}
\renewcommand{\geq}{\geqslant}
\newcommand{\E}{\mathbb{E}}
\newcommand{\1}{\mathds 1}
\newcommand{\ffi}{\varphi}
\newcommand{\eps}{\varepsilon}
\newcommand{\BB}{\mathscr{B}}
\newcommand{\Id}{\mathop{\mathrm{Id}}\nolimits}
\renewcommand{\Re}{\ensuremath{{\rm Re\,}}}
\renewcommand{\leq}{\leqslant}
\renewcommand{\geq}{\geqslant}
\newcommand{\FF}{\mathcal F}
\newcommand{\PP}{\mathbb{P}}
\newcommand{\aA}{\mathcal{A}}
\newcommand{\calc}{\mathscr{C}}
\newcommand{\Ham}{\mathcal{H}}
\def\Nat{\mathbb{N}}
\def\liml{\lim\limits}
\def\Dom{\text{\rm Dom}}
\def\ffi{\varphi}
\def\eps{\varepsilon}
\def\Om{\Omega}
\def\cR{{\mathbb R}}
\def\cRd{{\mathbb R}^d}
\def\cC{{\mathbb C}}
\def\pd{\partial}
\begin{document}

\title[Subordination principle and Feynman-Kac formulae]{Subordination principle and Feynman-Kac formulae for generalized time-fractional evolution equations}

\author{Christian Bender}
\address{Universit\"{a}t des Saarlandes, Fachrichtung Mathematik, Postfach 15 11 50, 66041 Saarbr\"{u}cken.} 
\email{\texttt{bender@math.uni-sb.de}}

\author{Marie  Bormann}
\address{ Technische Universit\"{a}t Braunschweig, Institut f\"{u}r Mathematische Stochastik,    Universit\"{a}tsplatz 2, 38106 Braunschweig.}
\email{\texttt{m.bormann@tu-bs.de}, \texttt{marie-christin.bormann@math.uni-leipzig.de }} 

\author{Yana A. Butko}
\address{ Technische Universit\"{a}t Braunschweig,  Institut f\"{u}r Mathematische Stochastik,  Universit\"{a}tsplatz 2, 38106 Braunschweig.}
\email{\texttt{y.kinderknecht@tu-bs.de}, \texttt{yanabutko@yandex.ru}}

\date{\today}

\begin{abstract}

We consider   generalized time-fractional evolution equations of the form $$u(t)=u_0+\int_0^tk(t,s)Lu(s)ds$$ with a fairly general memory kernel $k$ and an operator $L$ being the generator of a strongly continuous semigroup. In particular,    $L$ may be the generator $L_0$ of a Markov process $\xi$ on some state space $Q$, or $L:=L_0+b\nabla+V$ for a suitable potential $V$ and drift $b$. Moreover,  $L$ may be the generator of a  subordinate semigroup or a Schr\"{o}dinger type group. This class of evolution equations includes in particular time- and space- fractional heat and Schr\"odinger type equations.
We show that a subordination principle holds for such evolution equations and  obtain Feynman-Kac formulae for solutions of these  equations with the use of different stochastic processes, such as subordinate Markov processes and  randomly scaled Gaussian processes.  In particular, we obtain some Feynman-Kac formulae with generalized grey Brownian motion and other related self-similar processes with stationary increments.

\bigskip

\noindent\textbf{Keywords:} anomalous diffusion, time-fractional evolution equations, fractional calculus, subordination principle, Feynman-Kac formulae, %stochastic solutions,
 randomly scaled Gaussian processes, generalized grey Brownian motion,
 time-changed Markov processes, 
 %scaled Brownian motion with random diffusivity,
 % inverse subordinators, %self-similar processes with stationary increments,
   Marichev-Saigo-Maeda generalized fractional operators,   %multinomial Mittag-Leffler  function.
   Hille-Phillips functional calculus.
   
   \bigskip
   
\noindent\textbf{MSC2020:} 
35R11  % Fractional PDEs
%35C15  % Integral representations of solutions to PDEs
%34K30  % Functional-differential equations in abstract spaces
47D06  %One-parameter semigroups and linear evolution equations
47D08  %Schrödinger and Feynman-Kac semigroups
47A60  % Functional calculus for linear operators
%46N20  %Applications of functional analysis to differential and integral equations
60G22  % Fractional stochastic processes (including fractional Brownian motion)
%60G18  %Self-similar stochastic processes
60J25  %Continuous-time Markov processes on general state space
%46N30  %Applications of functional analysis in probability theory and statistics
%45D05  %Volterra integral equations 
   
\end{abstract}

\maketitle
%\tableofcontents

\section{Introduction}\label{Sec:Intro}

Many natural phenomena exhibit a diffusive behaviour such that the displacement distribution has a non-Gaussian form and / or its variance is not linear in time. Such phenomena are usually called \emph{anomalous diffusion} and are observed in many complex systems, ranging from turbulence and plasma physics to soft matter and neuro-physiological systems (see, e.g.,~\cite{C4CP03465A,MR1809268,MR3916448} and references therein). Many different models have been proposed for the description of such phenomena. One of the earliest approaches obtains different regimes of anomalous diffusion as  proper scaling limits of  continuous time random walks.   Stochastic processes which arise as such scaling limits are \emph{Markov processes time-changed by}   so-called \emph{inverse subordinators} (see, e.g.,~\cite{MR1874479,MR2766141,MR3987876,MR2074812,MR2442372} and references therein).      In the frame of this approach, time- and / or space-fractional evolution equations emerge as governing equations for the underlying stochastic processes. The basic evolution equation in this context is the time- and / or space-fractional heat equation which replaces the standard heat equation, the basic equation of the classical diffusion models:
\begin{align}\label{eq:time+space-fracEvEq}
u(t,x)=u_0(x)-\int_0^t\frac{(t-s)^{\beta-1}}{\Gamma(\beta)}\left(-\frac12\Delta  \right)^\gamma u(s,x)ds,\qquad \beta\in(0,1],\quad\gamma\in(0,1].
\end{align}
Here $-\left(-\frac12\Delta  \right)^\gamma$, $\gamma\in(0,1)$, is the fractional Laplacian~\cite{MR3613319}. Equation~\eqref{eq:time+space-fracEvEq} serves as a governing equation for the process $(Y_{\mathcal{E}^\beta_t})_{t\geq0}$ which is a symmetric $2\gamma$-stable L\'evy process $(Y_t)_{t\geq0}$ time-changed by an independent inverse $\beta$-stable subordinator $(\mathcal{E}^\beta_t)_{t\geq0}$.

Another direction in the theoretical description of anomalous diffusion emerges by modelling the diffusion in complex media and interprets the anomalous character of the diffusion as a consequence  of a very heterogeneous character of the  environment \cite{MR3280006,PhysRevLett.113.098302,MR3903618,Jain2017,MR3916448,Sposini_2018}.  Some of the models   of diffusion in complex media are based on \emph{randomly scaled Gaussian processes} (RSGP), see, e.g.,~\cite{MR3586912,MR3513003,Wang_2020} and references therein. In particular,   processes of the form $(\sqrt{A}G_t)_{t\geq0}$, where $(G_t)_{t\geq0}$ is a Gaussian process and $A$ is a nonnegative random variable, which is independend of $(G_t)_{t\geq0}$, are considered. The most well-known RSGP of this type is the \emph{generalized grey Brownian motion} (GGBM) $(X^{\alpha,\beta}_t)_{t\geq0}$, $\alpha\in(0,2)$, $\beta\in(0,1]$. The GGBM $(X^{\alpha,\beta}_t)_{t\geq0}$ was introduced in works of Mainardi, Mura and their coauthors  \cite{MR2501791,MR2430462,MR2588003}, and can be realized as 
\begin{align}\label{eq:GGBM}
X^{\alpha,\beta}_t:=\sqrt{A_\beta}B^{\alpha/2}_t,
\end{align}   
 where $B^{\alpha/2}_t$ is a fractional Brownian motion (FBM) with Hurst parameter $\alpha/2$ and $A_\beta$ is a   nonnegative random variable with $\E[e^{-\lambda A_\beta}]=E_\beta(-\lambda)$ (here $E_\beta$ is the Mittag-Leffler function with parameter $\beta$). Due to the properties of fractional Brownian motion, the GGBM (and some further related RSGP) are \emph{self-similar processes with  stationary increments} (SSSI), which makes such processes attractive for modelling. Note however, that fractional Brownian motion (and hence GGBM) is neither a Markov process,  nor a semimartingale. The governing equation of GGBM is the following time-stretched time-fractional heat equation
\begin{equation}\label{GGBM-eq}
u(t,x)=u_0(x)+\frac{\alpha}{\beta\Gamma(\beta)}\int_0^t s^{\frac{\alpha}{\beta}-1}\left(t^{\frac{\alpha}{\beta}}-s^{\frac{\alpha}{\beta}}  \right)^{\beta-1}\frac12\Delta u(s,x)ds.
\end{equation}
Equation~\eqref{GGBM-eq} reduces to the time-fractional heat equation (i.e. equation~\eqref{eq:time+space-fracEvEq} with $\gamma:=1$) if $\alpha:=\beta$. 
For $\gamma\in(0,1)$, the time- and space-fractional heat equation~\eqref{eq:time+space-fracEvEq} is shown to be the governing equation for another SSSI RSGP (see~\cite{MR3513003}).  Therefore, both classes of stochastic processes  discussed above can be used to solve the same time- (and  space-) fractional evolution equations. These classes of processes have however very different nature and properties (see, e.g.,~\cite{doi:10.1063/5.0029913}).
Let us finally mention another type  of RSGP considered in the literature. It is represented by the  \emph{scaled Brownian motion with random  diffusivity} (see, e.g.,~\cite{DOSSANTOS2021110634} and references therein) which can be thought of as a solution $(X_t)_{t\geq0}$ of a heuristic stochastic equation $X_t=\int_0^t\dot{B}_s\sqrt{A_s \tau(s)}ds$, where $(\dot{B}_t)_{t\geq0}$ is a white noise, $(A_t)_{t\geq0}$ is a suitable nonnegative stochastic process  which is independent of Brownian motion $({B}_t)_{t\geq0}$ and $\tau\,:\,[0,\infty)\to[0,\infty)$ is a deterministic function (usually a power function). The processes $(B_{At^\theta})_{t\geq0}$ and $(\sqrt{A} B_{t^\theta})_{t\geq0}$  may be considered as special cases of scaled Brownian motion with random  diffusivity (cf.~\cite{DOSSANTOS2021110634}).

In this paper, we study a general class of evolution equations %%with memory kernel
 of the form
%\begin{equation*} %\label{eq:EvEq}
%u(t)=u_0+\int_0^tk(t,s)Lu(s)ds,
%\end{equation*}
\begin{align}\label{eq:EvEq-general}
u(t)&=u_0+\int_0^t k(t,s)Lu(s)ds,\qquad t>0, 
\end{align}
where $k$ is a fairly general memory kernel and $L$ is the infinitesimal generator of a strongly continuous semigroup $(T_t)_{t\geq 0}$ acting on some Banach space $X$. We identify conditions on the memory kernel $k$ which admit to write the solution operator of this equation %\eqref{eq:EvEq}
 in the form
$$
\Dom(L)\rightarrow X,\quad   u_0\mapsto \int_0^\infty (T_au_0)\, \mathcal{P}_{A(t)}(da)
$$
for a family $(\mathcal{P}_{A(t)})_{t\geq 0}$ of probability measures on the positive real line, which depends on  $k$ only. We, thus, consider this representation as a subordination principle associated to the memory kernel $k$. %In Section \ref{sec:Preliminaries}, we show that our conditions on the kernels $k$ are satisfied by several classes of generalized time-fractional kernels (including, but not limited to the ones mentioned above), which have been studied separately in the literature.
 We state the subordination principle in Section~\ref{sec:main}, and in particular discuss how to obtain stochastic representations of the solution, if the $L$ is (a Bernstein function of) the infinitesimal generator of a Markov process (plus a potential).  The most natural stochastic reprensentations of such an approach are given in terms of  time-changed Markov processes.  In Section~\ref{sec:Examples}, we explain, however, how to arrive at representations in terms of non-Markovian processes such as generalized grey Brownian motion or even in terms of stochastic differential equations driven by more general randomly scaled fractional Brownian motions. In Section~\ref{sec:Examples}, we discuss also  evolution equations~\eqref{eq:EvEq-general} in the special case of kernels $k$ of convolution type; we present some examples of such kernels and discuss an equivalent form of equation~\eqref{eq:EvEq-general} in the setting of generalized Caputo type fractional derivatives.   Finally, the proofs are provided in Section \ref{sec:proofs}. While the main results can be considered as generalizations of our previous results in \cite{bebu1} beyond the case of pseudo-differential operators $L$ associated to L\'evy processes, the proofs are completely different, relating (an approximate version of) the subordination principle to a family of Volterra equations  via the Hille-Phillips functional calculus.

%%%%%%%%%%%%%%%%%%%%%%%%%%%%%%%%%%%%%%%%%%%%%%%%%%%%%%%%%%%%%%%%%%%%%%%%%%%%%%%%%%%%%%%%%%%%%%%%%%%%%%%%%%%%%%%%%%%%%

%%%%%%%%%%%%%%%%%%%%%%%%%%%%%%%%%%%%%%%%%%%%%%%%%%%%%%%%%%%%%%%%%%%%%%%%%%%%%%%%%%%%%%%%%%%%%%%%%%%%%%%%%%%%%%%%%%%
\section{Main results}\label{sec:main}
In this section, we state our main results.
We start from a general abstract setting and extract some Feynman-Kac formulae as special cases afterwards. 
%The proofs are provided in Section~\ref{sec:proofs}. 

\begin{assumption}\label{ass:L}
Let  $X$ be a Banach space with a norm $\|\cdot\|_X$. Let $(T_t)_{t\geq0}$ be a strongly continuous %\footnote{A family $(T_t)_{t\geq0}$ of bounded linear operators on $X$ is called a \emph{strongly continuous semigroup} if $T_0=\Id$, $T_t\circ T_s=T_{t+s}$ for all $t,s\geq0$ and $\lim_{t\to0}\|T_t\ffi-\ffi\|_X=0$ for all $\ffi\in X$.}
 semigroup on $X$ %of type\footnote{For every strongly continuous semigroup $(T_t)_{t\geq0}$ there exist constants $w\geq0$ and $M\geq 1$ such that $\|T_t\|\leq M e^{wt}$ for all $t\geq0$. We say that $(T_t)_{t\geq0}$ is of \emph{type} $w$  if  $\|T_t\|\leq M e^{wt}$ holds with this $w$ and some $M\geq1$. Type is defined non-uniquely; if $(T_t)_{t\geq0}$ is of type $w$ then it is also of type $w+\epsilon$ for any $\epsilon>0$.  } $w$
  with generator %\footnote{The generator $(L,\Dom(L)$ of a strongly continuous semigroup $(T_t)_{t\geq0}$ on a Banach space $X$ is a linear operator $L$ in $X$ defined by $L\ffi:=\lim_{t\to0}\frac{T_t\ffi-\ffi}{t}$ with domain $\Dom(L):=\left\{\ffi\in X\,\,:\,\, \lim_{t\to0}\frac{T_t\ffi-\ffi}{t}\,\,\text{exists}  \right\}$.}
   $(L,\Dom(L))$.  
\end{assumption}
We consider the  evolution equation~\eqref{eq:EvEq-general} with operator $L$ as in Assumption~\ref{ass:L}, 
%\begin{align}\label{eq:EvEq-general}
%u(t)&=u_0+\int_0^t k(t,s)Lu(s)ds,\qquad t>0, 
%\end{align}
with $u_0\in\Dom(L)$, $u\,:\,[0,\infty)\to X$ and $k$ satisfying the following Assumptions~\ref{ass:k}--\ref{ass:Phi}. %In the sequel, we need also the following assumption:

\begin{assumption}\label{ass:k} We consider a Borel-measurable kernel $k\,:\,(0,\infty)\times(0,\infty)\to\cR$ satisfying the condition:  $\exists\,\alpha^*\in[0,1)$ and $\exists\,\eps>0$ such that for each $T>0$ 
\begin{align*}
K_T:=\sup\limits_{0<t\leq T}t^{\alpha^*-\frac{1}{1+\eps}}\|k(t,\cdot)\|_{L^{1+\eps}((0,t))}<\infty.
\end{align*}
\end{assumption}
In order to identify the family of probability measures $(\mathcal{P}_{A(t)})_{t\geq 0}$ for the subordination, we specify their Laplace transform in terms of the memory kernel $k$. To this end we define the function $\Phi:[0,\infty)\times \mathbb{C}\rightarrow \mathbb{C}$ via
\begin{align}
&\Phi(t,\lambda):=\sum_{n=0}^\infty c_n(t)\lambda^n, \label{eq:Phi}\\
&
c_0(t):=1\qquad\forall\,\,t\geq0\nonumber\qquad\text{ and}\\
&
 c_n(t):=\left\{\begin{array}{ll}
\int_0^t k(t,s)c_{n-1}(s)ds, & \quad\forall\,\,t>0,\\
0 & \quad t=0,
\end{array}\right.
\qquad n\in\Nat,\label{eq:coeff of Phi}
\end{align}
It has been shown in~\cite{bebu1} that, under Assumption~\ref{ass:k}, the function $\Phi$ is well-defined (i.e., the integrals in the recursion formula exist) and, for fixed $t$, entire in $\lambda$.

\begin{assumption}\label{ass:Phi}
 Let the function $\Phi$ be constructed from the kernel $k$ via formulas~\eqref{eq:Phi},~\eqref{eq:coeff of Phi}.
   We assume that the restriction of the function $\Phi(t,-\cdot)$ on $(0,\infty)$ is completely monotone 
 for all $t\geq0$, i.e., for each $t\geq0$, there exists  a nonnegative random variable  $A(t)$  whose distribution $\mathcal{P}_{A(t)}$ has the Laplace transform given by $\Phi(t,-\cdot)$: 
 \begin{align}\label{eq:A(t)}
 \int_0^\infty e^{-\lambda a}\mathcal{P}_{A(t)}(da)=\Phi(t,-\lambda),\qquad\forall\,\lambda\in\cC,\quad\Re\lambda\geq0.
 \end{align}
Note that  $\mathcal{P}_{A(0)}=\delta_0$ and $A(0)=0$ a.s. since $\Phi(0,-\lambda)\equiv 1$.
\end{assumption}
Typical examples of kernels $k$ satisfying Assumptions~\ref{ass:k}--\ref{ass:Phi} are kernels of convolution type and homogeneous kernels related to operators of generalized fractional calculus (see  Section~\ref{sec:Examples} below, cf.~\cite{bebu1}). Recall that a  kernel $k$ is  {\it homogeneous of degree} $\theta-1$ for some $\theta>0$ if
\begin{align}\label{eq:homogen k}
k(t,ts)=t^{\theta-1}k(1,s),\qquad\qquad\forall\,\,t\in(0,\infty),\quad s\in(0,1).
\end{align}

%\noindent (ii)  Let $(T_t)_{t\geq0}$ be  the semigroup from Assumption~\ref{ass:L}. We assume that $e^{aw}\mathcal{P}_{A(t)}(da)$ is a bounded measure on $([0,\infty),\mathscr{B}([0,\infty)))$. This implies that the left hand side of~\eqref{eq:A(t)} is well defined   and    equality~\eqref{eq:A(t)} holds\footnote{ Note that, if the semigroup $(T_t)_{t\geq0}$ is  a contraction semigroup (i.e. $\|T_t\|\leq1$ for all $t\geq0$), we take $w=0$ and  item~(ii) of Assumption~\ref{ass:Phi} follows automatically from item~(i).}  for all $\lambda$ with $\Re \lambda\geq -w$.

\begin{theorem}\label{thm:1}
Let Assumption~\ref{ass:L} hold. Let $k$ satisfy Assumption~\ref{ass:k} and assume that the corresponding function $\Phi$ satisfies Assumption~\ref{ass:Phi}.  Then:

\noindent (i) For each $t\geq0$, the operator $\Phi(t,L)$ given by the Bochner integral
\begin{align}\label{eq:Phi(t,L)}
\Phi(t,L)\ffi:=\int_0^\infty T_a\ffi\,\mathcal{P}_{A(t)}(da),\qquad \ffi\in X,
\end{align}
is well defined and it is a bounded linear operator on $X$.

\noindent (ii) For each $t>0$ and each $u_0\in\Dom(L)$, the function 
\begin{align}\label{eq:u(t)}
u(t):=\Phi(t,L)u_0
\end{align}
solves equation~\eqref{eq:EvEq-general}  and it holds $\lim_{t\searrow 0} u(t)=u_0$.

\noindent (iii)  Suppose additionally that $k$ is homogeneous of order $\theta-1$ for some $\theta>0$. %, i.e. \eqref{eq:homogen k} is satisfied. %, i.e.
%$$ k(t,ts)=t^{\theta-1}k(1,s) $$ for every $t\in (0,\infty)$.
%\mbm{I don't see why $\beta>0$ would be a problem here or in BeBu1.}
 Then one can choose $A(t):=At^\theta$ in~\eqref{eq:Phi(t,L)}, where $A$ is a nonnegative random variable  such that
\begin{equation}\label{eq:A}
\int_0^\infty e^{-\lambda a} \mathcal{P}_{A}(da) = \Phi(1,-\lambda) \qquad \forall \lambda\in\mathbb{C},\quad\Re\lambda\geq0.
\end{equation} %(cf. Theorem~\ref{thm:generalTheorem}~(v)).
\end{theorem}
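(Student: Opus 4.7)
The plan is to establish (i)--(iii) separately: (i) follows from strong-semigroup bounds combined with the entirety of $\Phi(t,\cdot)$; (ii) uses Yosida approximation to reduce the Volterra identity to the bounded-generator case, where it follows directly from the power-series recursion for $\Phi$; (iii) is an elementary scaling argument on the coefficients $c_n(t)$.

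For (i), I would fix $M\geq 1$, $\omega\in\mathbb{R}$ with $\|T_a\|_{X\to X}\leq Me^{\omega a}$. Complete monotonicity of $\Phi(t,-\cdot)$ combined with entirety of $\Phi(t,\cdot)$ forces $c_n(t)\geq 0$ and $\int_0^\infty a^n\mathcal{P}_{A(t)}(da)=n!\,c_n(t)$; monotone convergence then yields $\int_0^\infty e^{|\omega|a}\mathcal{P}_{A(t)}(da)=\Phi(t,|\omega|)<\infty$. Together with strong continuity of $a\mapsto T_a\varphi$ and the bound $\|T_a\varphi\|\leq Me^{|\omega|a}\|\varphi\|$, this gives Bochner integrability and $\|\Phi(t,L)\|_{X\to X}\leq M\Phi(t,|\omega|)$.

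For (ii), I would introduce the Yosida approximants $L_n:=nL(n-L)^{-1}$, bounded operators on $X$ whose semigroups $T_a^{(n)}:=e^{aL_n}$ are uniformly bounded in $n$ on compact $a$-intervals, converge strongly to $T_a$ uniformly on compacts, and satisfy $L_n\varphi\to L\varphi$ for $\varphi\in\Dom(L)$. For each bounded $L_n$ the series $\sum_k c_k(t)L_n^k$ converges in operator norm (since $\Phi(t,\cdot)$ is entire), and the moment identity from (i) gives
\begin{equation*}
\sum_{k=0}^\infty c_k(t)L_n^k\varphi=\int_0^\infty T_a^{(n)}\varphi\,\mathcal{P}_{A(t)}(da)=:\Phi(t,L_n)\varphi.
\end{equation*}
Applying the recursion $c_{k+1}(t)=\int_0^t k(t,s)c_k(s)ds$ inside the series (Fubini is justified by absolute convergence together with Assumption~\ref{ass:k}) yields the Volterra identity
\begin{equation*}
\Phi(t,L_n)u_0=u_0+\int_0^t k(t,s)L_n\Phi(s,L_n)u_0\,ds.
\end{equation*}
To let $n\to\infty$, dominated convergence in the Bochner integral representing $\Phi(t,L_n)u_0$ (with majorant $Me^{\omega' a}\|u_0\|$ for some $\omega'>\omega$ and $n$ large) yields $\Phi(t,L_n)u_0\to\Phi(t,L)u_0$. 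Since $L_n$ commutes with $T_a^{(n)}$, I write $L_n\Phi(s,L_n)u_0=\Phi(s,L_n)L_nu_0$ and decompose
\begin{equation*}
\Phi(s,L_n)L_nu_0-\Phi(s,L)Lu_0=\Phi(s,L_n)(L_nu_0-Lu_0)+(\Phi(s,L_n)-\Phi(s,L))Lu_0;
\end{equation*}
the first summand vanishes in the limit by uniform boundedness of $\Phi(s,L_n)$ for $s\in[0,t]$ and $L_nu_0\to Lu_0$, the second by the same Bochner DCT applied to $Lu_0$. H\"older's inequality with the $L^{1+\eps}$-bound on $k(t,\cdot)$ from Assumption~\ref{ass:k} supplies an integrable majorant that allows a second DCT to pass the limit through the outer $ds$-integral. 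The initial condition $u(t)\to u_0$ as $t\searrow 0$ follows from $\mathcal{P}_{A(t)}\Rightarrow\delta_0$ (a consequence of $\Phi(t,-\lambda)\to 1$ pointwise) together with continuity of $a\mapsto T_au_0$.

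For (iii), the substitution $s=tu$ together with $k(t,tu)=t^{\theta-1}k(1,u)$ gives, by induction on $n$, $c_n(t)=t^{n\theta}c_n(1)$, hence $\Phi(t,\lambda)=\Phi(1,t^\theta\lambda)$. Uniqueness of Laplace transforms then identifies $\mathcal{P}_{A(t)}$ as the law of $t^\theta A$, where $A$ has Laplace transform $\Phi(1,-\cdot)$. The main technical obstacle is the limiting argument in (ii): securing uniform operator bounds on $\Phi(s,L_n)$ over $n\in\Nat$ and $s\in[0,T]$, and justifying the two nested interchanges of limit and integration under only the mild $L^{1+\eps}$-integrability of $k(t,\cdot)$ granted by Assumption~\ref{ass:k}.
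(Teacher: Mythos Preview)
Your argument is correct and shares the Yosida-approximation architecture with the paper's, but the central step---lifting the scalar Volterra identity to the operator level---is carried out differently. The paper multiplies $\Phi(t,-\lambda)=1-\lambda\int_0^t k(t,s)\Phi(s,-\lambda)\,ds$ by the regularizer $\gamma/(\gamma+\lambda)$, interprets each resulting term as the Laplace transform of a bounded complex measure, and then invokes the Hille--Phillips functional calculus (an algebra homomorphism from such Laplace transforms to $\mathcal{L}(X)$) to obtain directly
\[
\gamma R_\gamma(L)\bigl(\Phi(t,L)-\Id\bigr)u_0=\Psi(t,L)L_\gamma u_0,\qquad \Psi(t,L)u_0=\int_0^t k(t,s)\Phi(s,L)u_0\,ds,
\]
with $\Phi(t,L)$ and $\Psi(t,L)$ already the \emph{final} operators; the limit $\gamma\to\infty$ is then a single clean step. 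Your route instead establishes the identity for the bounded approximant $L_n$ via the power-series expansion and the moment identity $\int_0^\infty a^m\,\mathcal{P}_{A(t)}(da)=m!\,c_m(t)$, and only afterwards passes $n\to\infty$ in both $\Phi(t,L_n)$ and in the right-hand side. This is more elementary---no functional-calculus machinery---at the price of a heavier limiting argument (two nested dominated-convergence steps plus the uniform type estimate $\|T_a^{(n)}\|\leq Me^{\omega'a}$ for $n$ large). One small gap to patch in your sketch of the initial condition: weak convergence $\mathcal{P}_{A(t)}\Rightarrow\delta_0$ alone controls only bounded test functions, whereas $a\mapsto T_au_0-u_0$ may grow like $e^{\omega a}$; the paper handles this by showing that the tilted measures $e^{ca}\mathcal{P}_{A(t)}(da)$ converge weakly to $\delta_0$ (via $\Phi(t,c+i\rho)\to 1$) and then testing against the bounded continuous function $a\mapsto\|T_au_0-u_0\|_Xe^{-ca}$.
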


\begin{remark}
Theorem~\ref{thm:1} provides a subordination principle for evolution equations of the form~\eqref{eq:EvEq-general}: Solution~\eqref{eq:u(t)} of equation~\eqref{eq:EvEq-general} is obtained from the solution $T_tu_0$ of the corresponding standard evolution equation $\frac{\pd u}{\pd t}=Lu$, $u(0)=u_0$, via  a ``subordination'' with respect to the ``subordinator'' $(A(t))_{t\geq0}$.
\end{remark}

%Since the family $(\Phi(t,L))_{t\geq0}$ is obtained from the semigroup $(T_t)_{t\geq0}$ by formula~\eqref{eq:Phi(t,L)},  
%as a  ``subordination'' with respect to the ``subordinator'' $(A(t))_{t\geq0}$.
If the semigroup $(T_t)_{t\geq0}$ has a stochastic representation, then, due to subordination formula~\eqref{eq:Phi(t,L)},  the family $(\Phi(t,L))_{t\geq0}$  has a stochastic representation as well. We present some examples of such stochastic representations below.

\begin{corollary}\label{cor:FKFforMarkovProcesses+V}
(i) Let $Q$ be a Polish\footnote{A Polish space is a separable completely metrizable topological space.} space endowed with a Borel $\sigma$-field $\BB(Q)$ and $(\Om,\FF,\PP^x,(\xi_t)_{t\geq0})_{x\in Q}$ be a (universal) Markov process with state space $(Q,\BB(Q))$. Assume that the corresponding transition semigroup $(T^0_t)_{t\geq0}$, $T^0_t u_0(x):=\E^x\left[ u_0(\xi_t) \right]$, is a strongly continuous semigroup on some Banach space $X\subset B_b(Q)$ (where $B_b(Q)$ is the space of all bounded Borel measurable functions on $Q$). 
Let $(L_0,\Dom(L_0))$ be the generator of $(T^0_t)_{t\geq0}$. Let Assumption~\ref{ass:k} and Assumption~\ref{ass:Phi} hold. Let further $(A(t))_{t\geq0}$ be taken to be independent from $(\xi_t)_{t\geq0}$.  Then, for each $u_0\in\Dom(L_0)$, the function
\begin{align}\label{eq:FKF0}
u(t,x):=\E^x\left[u_0(\xi_{A(t)})  \right], \qquad t\geq0,\quad x\in Q,
\end{align}
solves the evolution equation
\begin{align}\label{eq:FKF2}
u(t,x)&=u_0(x)+\int_0^t k(t,s)L_0 u(s,x)ds, \qquad t>0, \quad x\in Q,\\
\lim_{t \searrow 0} u(t,x) &= u_0(x), \qquad x\in Q. \notag
\end{align}
%\mbm{Isn't this continuity property included in every following statement now, since every proof is based on Thm \ref{thm:1} ii). This also solves the aspect commented on on page 5?}

\noindent (ii) Let additionally $V\,:\,Q\to\cR$ be a Borel measurable function with $\sup_{x\in Q}V(x)\leq c$ for some $c\in\cR$ such that the (closure of the) operator $(L_0+V,\Dom(L_0+V))$ generates a strongly continuous semigroup $(T_t)_{t\geq0}$ on $X$ with stochastic representation\footnote{This is the classical Feynman-Kac formula which holds under very mild assumptions on processes and potentials, cf. \cite{MR1329992} Chapter~3.3.2, \cite{MR1772266,MR3389585}. For example, this Feynman-Kac formula holds in the case $X:=C_\infty(\cRd) =$ the space of continuous functions from $\cRd$ to $\cR$ vanishing at infinity, $(\xi_t)_{t\geq0}$ is a Feller diffusion on $\cRd$, $V\,:\,\cRd\to\cR$ is a bounded continuous function.}
$$
T_tu_0(x):=\E^x\left[ u_0(\xi_t)\exp\left( \int_0^tV(\xi_s)ds \right) \right],\qquad t\geq0,\quad x\in Q,\quad u_0\in X. 
$$
% Let Assumption~\ref{ass:Phi}~(ii) hold with $w:=\max\{c,0\}$. 
    Then, for each initial condition $u_0\in\Dom(L_0+V)$, the following Feynman-Kac formula
\begin{align}\label{eq:FKF1}
u(t,x):=\E^x\left[ u_0(\xi_{A(t)})\exp\left( \int_0^{A(t)}V(\xi_s)ds \right) \right],\qquad t\geq0,\quad x\in Q,
\end{align}
 provides a solution to the evolution equation
 \begin{align}\label{eq:FKF3}
u(t,x)&=u_0(x)+\int_0^t k(t,s)\big(L_0 u(s,x)+V(x)u(s,x)  \big)ds,\qquad t>0, \quad x\in Q,\\
\lim_{t \searrow 0} u(t,x) &= u_0(x), \qquad x\in Q. \notag
\end{align}

\noindent (iii)  Suppose additionally that $k$ is homogeneous of order $\theta-1$ for some $\theta>0$,
%\mbm{I don't see why $\beta>0$ would be a problem here or in BeBu1.}
 then one can choose $A(t):=At^\theta$ in~\eqref{eq:FKF0} and in~\eqref{eq:FKF1}, where $A$ is a nonnegative random variable which is independent from $(\xi_t)_{t\geq0}$ and satisfies~\eqref{eq:A}. 
\end{corollary}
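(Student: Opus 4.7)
The plan is to deduce all three parts of Corollary~\ref{cor:FKFforMarkovProcesses+V} directly from Theorem~\ref{thm:1} by exploiting the stochastic representations of the underlying semigroups and using Fubini's theorem to pass from the Bochner-integral representation of $\Phi(t,L)$ to an expectation.

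For part (i), I would first observe that $(T^0_t)_{t\geq 0}$ satisfies Assumption~\ref{ass:L} with generator $(L_0,\Dom(L_0))$ by hypothesis, and the other two assumptions are given. Hence Theorem~\ref{thm:1}(i)-(ii) applies and yields that, for $u_0\in\Dom(L_0)$, the function
$$u(t) := \Phi(t,L_0)u_0 = \int_0^\infty T^0_a u_0 \, \mathcal{P}_{A(t)}(da)$$
solves~\eqref{eq:FKF2} in the Banach space $X$ with $\lim_{t\searrow 0}u(t)=u_0$. It then remains to evaluate this Bochner integral pointwise in $x\in Q$. Plugging in $T^0_a u_0(x)=\E^x[u_0(\xi_a)]$, using the independence of $A(t)$ from $(\xi_t)_{t\geq 0}$ (conditioning on $A(t)$) and Fubini (justified because $u_0\in X\subset B_b(Q)$ is bounded and $\mathcal{P}_{A(t)}$ is a probability measure) gives
$$\E^x[u_0(\xi_{A(t)})] = \int_0^\infty \E^x[u_0(\xi_a)]\, \mathcal{P}_{A(t)}(da) = \int_0^\infty T^0_a u_0(x)\, \mathcal{P}_{A(t)}(da),$$
which identifies $u(t,x)$ with the right-hand side of~\eqref{eq:FKF0}.

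Part (ii) proceeds in exactly the same way, now applying Theorem~\ref{thm:1} to the semigroup $(T_t)_{t\geq 0}$ generated by (the closure of) $L_0+V$, which is strongly continuous on $X$ by assumption. The theorem yields the solution $u(t)=\Phi(t,L_0+V)u_0$ of~\eqref{eq:FKF3}. Substituting the classical Feynman-Kac formula
$$T_a u_0(x) = \E^x\!\left[u_0(\xi_a)\exp\!\left(\int_0^a V(\xi_s)\,ds\right)\right]$$
into the Bochner integral~\eqref{eq:Phi(t,L)} and applying Fubini yields~\eqref{eq:FKF1}. The main mild obstacle here is the integrability needed to interchange the expectation with the $\mathcal{P}_{A(t)}$-integral: the integrand is bounded by $\|u_0\|_X e^{ca}$ because of the uniform upper bound $V\leq c$, and
$$\int_0^\infty e^{ca}\,\mathcal{P}_{A(t)}(da) = \Phi(t,c)<\infty,$$
since $\Phi(t,\cdot)$ is entire by Assumption~\ref{ass:Phi} together with Theorem~\ref{thm:1}. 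This justifies the interchange.

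Part (iii) is immediate from Theorem~\ref{thm:1}(iii): under the additional homogeneity of $k$, one may take $A(t)=At^\theta$ with $A$ satisfying~\eqref{eq:A} and independent of $(\xi_t)_{t\geq 0}$, and the resulting representations~\eqref{eq:FKF0} and~\eqref{eq:FKF1} then simplify accordingly. Overall, the corollary is essentially a stochastic repackaging of Theorem~\ref{thm:1}; the only technical point requiring attention is the integrability check in (ii), which is handled by the finiteness of $\Phi(t,c)$.
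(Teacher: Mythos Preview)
Your proposal is correct and follows essentially the same approach as the paper: apply Theorem~\ref{thm:1} to the relevant semigroup, then identify the Bochner integral pointwise with the stochastic expectation via Fubini and independence, and invoke Theorem~\ref{thm:1}(iii) for the homogeneous case. Your explicit integrability check in part~(ii) via $\int_0^\infty e^{ca}\,\mathcal{P}_{A(t)}(da)=\Phi(t,c)<\infty$ is in fact a bit more detailed than what the paper records.
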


\begin{remark}
 If we choose  $(\xi_t)_{t\geq0}$ to be an $\cRd$-valued L\'{e}vy process, $\xi_t:=x+Y_t$ under $\PP^x$, and $X:=C_\infty(\cRd)$, then
%\ykm{we have to justify $u(t)\to u_0$ as $t\to0$}
Corollary~\ref{cor:FKFforMarkovProcesses+V}~(i) implies 
Theorem 1~(ii) in \cite{bebu1}, where the initial condition $u_0$ is even required to be a member of the Schwartz space of rapdily decreasing smooth functions.
If we take now a bounded continuous potential $V\,:\,\cRd\to\cR$, all assumptions of Corollary~\ref{cor:FKFforMarkovProcesses+V}~(ii) are fullfilled\footnote{In this case, $V$ is a bounded perturbation of $L$; the semigroup generated by $L+V$ exists, is again strongly continuous and has the required Feynman-Kac representation.} and hence the Feynman-Kac formula~\eqref{eq:FKF1} holds. 
Note that, if $V\equiv 0$, % for some constant $c\in\cR$, 
only one-dimensional marginal distributions of the process $(\xi_{A(t)})_{t\geq0}$ are relevant for the Feynman-Kac formula~\eqref{eq:FKF1} and  the process $(\xi_{A(t)})_{t\geq0}$ can be replaced by any other process with the same one-dimensional marginal distributions. 
If $V$ is a nonzero constant, some particular changings of the process  $(\xi_{A(t)})_{t\geq0}$ are possible. For example, if $(\xi_t)_{t\geq0}$ is a $\delta$-stable L\'{e}vy process,  one may replace $\xi_{A(t)}$ by $\left(A(t)\right)^{1/\delta}\zeta$, where a random variable $\zeta$ has the same distribution as $\xi_1$ and is independent from $(A(t))_{t\geq0}$. In the case of nonconstant potential $V$ it is not possible to change the structure of the process $(\xi_{A(t)})_{t\geq0}$ since %the distribution of 
the whole process $(\xi_s)_{s\geq0}$ is needed %being used
 in~\eqref{eq:FKF1}. 

\end{remark}

We next wish to apply the semigroup $(T_t)_{t\geq 0}$ associated to an infinitesimal generator $L$ in order to represent the solution of the evolution equation with memory kernel $k$ and the (space-)fractional operator $-(-L)^\gamma$. In order to cover this and related situations, we use subordination in the sense of Bochner~\cite{MR30151,MR50797}.  Recall that subordination in the sense of Bochner is a random time change of a given process $(\xi_t)_{t\geq0}$ by an independent $1$-dimensional increasing L\'{e}vy process (subordinator) $(\eta^f_t)_{t\geq0}$. Any subordinator can be characterized in terms of its Laplace exponent $f$: $\E\left[ e^{-\lambda \eta^f_t} \right]=e^{-tf(\lambda)}$; any such $f$ is a Bernstein function and is determined uniquely by its L\'{e}vy-Khintchine representation
\begin{align*}
f(\lambda)=a+b\lambda+\int_{(0,\infty)}\left(1-e^{-\lambda s}  \right)\nu (ds),
\end{align*} 
where $a$, $b\geq0$ and $\nu$ is a measure on $(0,\infty)$ satisfying $\int_{(0,\infty)}\min(s,1)\nu(ds)<\infty$. Let $(T_t)_{t\geq0}$  be a strongly continuous contraction semigroup on some Banach space $(X,\|\cdot\|_X)$ with generator $(L,\Dom(L))$. The family of operators $(T^f_t)_{t\geq0}$ defined by the Bochner integral
\begin{align*}
T^f_t\ffi:=\int_0^\infty T_s\ffi\,\mathcal{P}_{\eta^f_t}(ds),\qquad\ffi\in X,
\end{align*}
is said to be subordinate to $(T_t)_{t\geq0}$ with respect to the convolution semigroup of measures $\left( \mathcal{P}_{\eta^f_t} \right)_{t\geq0}$, where $\mathcal{P}_{\eta^f_t}$ is the distribution of $\eta^f_t$. The family $(T^f_t)_{t\geq0}$ is again a strongly continuous contraction semigroup on the space  $X$ whose generator $(L^f,\Dom(L^f))$ is the closure of the operator $(-f(-L),\Dom(L))$, where
\begin{align*}
-f(-L)\ffi:=-a\ffi+bL\ffi+\int_{(0,\infty)}\left( T_s\ffi-\ffi \right)\nu(ds),\qquad\ffi\in\Dom(L).
\end{align*}
If $(T_t)_{t\geq0}$ is the transition semigroup of a Feller process $(\xi_t)_{t\geq0}$  and $(\eta^f_t)_{t\geq0}$ is an independent subordinator,  then $(T^f_t)_{t\geq0}$ is the transition semigroup of the (again Feller) process $(\xi_{\eta^f_t})_{t\geq0}$. Further information on subordination in the sense of Bochner and all related objects can be found e.g. in~\cite{MR2978140}.

Consider now the function $\Phi^f(t,-\cdot):=\Phi(t,-f(\cdot))$. If the function $\Phi(t,-\cdot) $ is completely monotone, so is\footnote{As a composition of a Bernstein function $f$ and a completely monotone function $\Phi(t,-\cdot)$.} the function $\Phi^f(t,-\cdot)$. Hence there exists a family of  nonnegative random variables whose Laplace transform is given by $\Phi^f(t,-\cdot)$, $t\geq0$. Using distributions of these random variables and a strongly continuous contraction semigroup $(T_t)_{t\geq0}$ with generator $(L,\Dom(L))$, one can define the operator $\Phi^f(t,L)$ analogously to~\eqref{eq:Phi(t,L)}.

\begin{corollary}\label{cor:subordinate case}
Let Assumption~\ref{ass:L} hold. % with $\omega=0$.
 Let $k$ satisfy Assumption~\ref{ass:k} and the corresponding function $\Phi$ satisfy Assumption~\ref{ass:Phi}. Let $(A(t))_{t\geq0}$ be a family of nonnegative random variables satisfying~\eqref{eq:A(t)}. Let $(\eta^f_t)_{t\geq0}$ be a subordinator corresponding to a Bernstein function $f$ which is independent from $(A(t))_{t\geq0}$.

\medskip

\noindent (i) It holds:
\begin{align*}
\Phi^f(t,L)\ffi=\int_0^\infty T_s\ffi\,\mathcal{P}_{\eta^f_{A(t)}}(ds)=\Phi(t,L^f)\ffi,\qquad\ffi\in X.
\end{align*}
Moreover, for each $t>0$ and each  $u_0\in\Dom(L^f)$, the function  $u(t):=\Phi^f(t,L)u_0$ solves the evolution equation
\begin{align}\label{eq:EvEq-subordinate}
u(t)&=u_0+\int_0^t k(t,s)L^f u(s)ds,\qquad t>0\\
\lim_{t \searrow 0} u(t) &= u_0 \notag. 
\end{align}

\medskip 

\noindent (ii) Let all assumptions of Corollary~\ref{cor:FKFforMarkovProcesses+V} be fulfilled and $L$ in part (i) above be given by  $L:=L_0+V$, where $L_0$ and $V$ are as in Corollary~\ref{cor:FKFforMarkovProcesses+V}. Let additionally $V\leq0$. Let $(\xi_t)_{t\geq0}$ be a Markov process with generator $L_0$ which is independent from $(\eta^f_t)_{t\geq0}$ and $(A(t))_{t\geq0}$. Then  for $u_0\in \Dom((L_0+V)^f)$ the function
\begin{align}\label{eq:FKF+subordinate}
u(t,x):=\E^x\left[u_0\left(\xi_{\eta^f_{A(t)}}  \right) e^{\int_0^{\eta^f_{A(t)}} V(\xi_s)ds} \right]
\end{align}
solves the evolution equation
\begin{align}\label{eq:EvEq+Subordinate+V}
u(t,x)=u_0(x)+\int_0^t k(t,s)\left(L_0+V\right)^fu(s,x)ds.
\end{align}

\medskip

\noindent (iii)  Suppose additionally that $k$ is homogeneous of order $\theta-1$ for some $\theta>0$. Let $A$ be  a nonnegative random variable which satisfies~\eqref{eq:A}  and  is independent from $(\eta^f_t)_{t\geq0}$ and $(\xi_t)_{t\geq0}$. Then we can take $A(t):=At^\theta$ in~\eqref{eq:FKF+subordinate}. 

\end{corollary}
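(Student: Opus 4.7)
The plan is to reduce all three parts to Theorem~\ref{thm:1} applied to the subordinate semigroup $(T^f_t)_{t\geq 0}$, combined with a Laplace-transform identification of the mixing law $\mathcal{P}_{\eta^f_{A(t)}}$ and a Fubini exchange. The key observation is that, by independence of $(\eta^f_t)_{t\geq 0}$ and $(A(t))_{t\geq 0}$, the classical Laplace exponent formula yields
\begin{align*}
\E\bigl[e^{-\lambda \eta^f_{A(t)}}\bigr]
=\E\bigl[e^{-A(t)f(\lambda)}\bigr]
=\Phi\bigl(t,-f(\lambda)\bigr)=\Phi^f(t,-\lambda),\qquad \Re\lambda\geq 0,
\end{align*}
so by uniqueness of the Laplace transform $\mathcal{P}_{\eta^f_{A(t)}}$ is exactly the probability measure whose Laplace transform defines $\Phi^f(t,L)$ through the analogue of~\eqref{eq:Phi(t,L)}. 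This gives the first equality in (i) directly from the definition.

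For the second equality in (i), I would substitute the subordination representation $T^f_a\ffi=\int_0^\infty T_s\ffi\,\mathcal{P}_{\eta^f_a}(ds)$ into $\Phi(t,L^f)\ffi=\int_0^\infty T^f_a\ffi\,\mathcal{P}_{A(t)}(da)$ and swap the order of integration. Fubini for Bochner integrals is justified by the uniform bound $\|T_s\ffi\|_X\leq \|\ffi\|_X$ (contractivity) together with the fact that $\mathcal{P}_{\eta^f_a}\otimes\mathcal{P}_{A(t)}$ is a probability measure, and the resulting iterated mixing law is exactly $\mathcal{P}_{\eta^f_{A(t)}}$. The evolution equation statement in (i) then follows by applying Theorem~\ref{thm:1}(ii) with $L$ replaced by $L^f$: the semigroup $(T^f_t)_{t\geq 0}$ is strongly continuous with generator $L^f$, so Assumption~\ref{ass:L} holds for $L^f$ while the kernel $k$ and the function $\Phi$ are unchanged, giving $\Phi(t,L^f)u_0$ as the solution of~\eqref{eq:EvEq-subordinate} for $u_0\in\Dom(L^f)$.

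Part (ii) follows from (i) applied with $L:=L_0+V$; the hypothesis $V\leq 0$ ensures that the Feynman-Kac semigroup $(T_t)_{t\geq 0}$ is contractive and hence amenable to Bochner subordination. Using independence of $(\xi_t)$, $(\eta^f_t)$ and $(A(t))$, a further Fubini exchange rewrites $\int_0^\infty T_s u_0(x)\,\mathcal{P}_{\eta^f_{A(t)}}(ds)$ as the single conditional expectation in~\eqref{eq:FKF+subordinate}. Part (iii) is then immediate from Theorem~\ref{thm:1}(iii): homogeneity of $k$ of order $\theta-1$ yields $A(t)=At^\theta$ in law, and the independence of $A$ from $(\xi_t)$ and $(\eta^f_t)$ is retained. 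The main obstacle I expect is the rigorous justification of the two Fubini exchanges in the Bochner-integral setting and the joint measurability of the Feynman-Kac integrand in~\eqref{eq:FKF+subordinate}; the former reduces to the uniform norm bound combined with the probability-measure property, while the latter is standard given the Polish state space and the Borel measurability of $V$.
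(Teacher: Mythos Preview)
Your proposal is correct and follows essentially the same route as the paper: identify the law of $\eta^f_{A(t)}$ via its Laplace transform $\Phi(t,-f(\cdot))$, interchange the two integrations by Fubini (justified by contractivity of $(T_t)$ and $(T^f_t)$), and then invoke Theorem~\ref{thm:1} applied to the subordinate semigroup $(T^f_t)_{t\geq 0}$ with generator $L^f$. The paper's proof carries out exactly these steps in the same order for all three parts, so there is no substantive difference.
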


\begin{remark}
 When $L$ is a pseudo-differential operator associated to a L\'evy process and $V\equiv 0$, then we obtain Theorem 1 (iii) in \cite{bebu1} as a special case.
\end{remark}

\begin{remark}
Theorem~\ref{thm:1} can be applied also to generalized time-fractional Schr\"odinger type equations. Note, that different types of fractional analogues of the standard  Schr\"{o}dinger equation  have been discussed in the literature, see, e.g.,~\cite{MR3059867,MR3417088,MR3609236,MR3821542,MR3084964}. Such equations seem to be physically relevant; in particular, some of them arise from the standard  quantum dynamics under special geometric constraints~\cite{MR3965362,MR3919012}. So, let $X:=L^2(\cRd)$ be the Hilbert space of complex-valued square integrable functions; $X$ plays the role of the state space of a quantum system. Let $(\Ham,\Dom(\Ham))$ be a (bounded from below)  self-adjoint operator in $X$  playing the role of the Hamiltonian (energy operator) of this quantum system.  Then $(L,\Dom(L)):=(-i\Ham,\Dom(\Ham))$ does generate a strongly continuous contraction semigroup $(T^\Ham_t)_{t\geq0}$ on $X$ by the Stone theorem. Let $k$, $\Phi$, $(A(t))_{t\geq0}$ be as in Theorem~\ref{thm:1}. Then, by Theorem~\ref{thm:1},
\begin{align}\label{eq:Schr solution}
u(t,x):=\E\left[ T^\Ham_{A(t)}u_0(x) \right]
\end{align}
solves\footnote{Similar results can be found in~\cite{kolokoltsov2017chronological} for the equation~\eqref{eq:Schr eq} with a particular  memory kernel $k$. }  the generalized time-fractional Schr\"odinger type equation
\begin{align}\label{eq:Schr eq}
u(t,x)=u_0(x)-i\int_0^t k(t,s)\Ham u(s,x)ds,
\end{align}
where the equality above is understood as the equality of two elements of the space $X$. 
 For a few particular choices of the Hamiltonian, some stochastic representations of the corresponding semigroup $(T^\Ham_t)_{t\geq0}$ are known in the literature (see, e.g.,~\cite{MR1795612,MR2754894,MR574173}). Inserting these stochastic representations into~\eqref{eq:Schr solution}, one obtains Feynman-Kac formulae (which may be local in the space variables) for the corresponding  generalized time-fractional Schr\"odinger type equation~\eqref{eq:Schr eq}. 
  %We do not present these Feynman-Kac formulae explicitly in order to avoid lengthy technical preliminaries.
\end{remark}

\begin{remark}
 Under assumptions of Theorem~\ref{thm:1} let $u(t)$ be the solution of equation~\eqref{eq:EvEq-general} given by formula~\eqref{eq:u(t)}. 
 Consider now the following class of time-stretchings $\mathcal{G}:=\big\{g\,:\,[0,\infty)\to[0,\infty)$ such that $g(\tau)\nearrow\infty$ as $\tau\nearrow\infty$, $g(\tau)=\int_0^\tau \dot{g}(\sigma)d\sigma$ for some $\dot{g}\in L^1_{loc}([0,\infty))$, $g(\tau)>0$ and $\dot{g}(\tau)>0$ for all $ \tau>0  \big\}$. The change of variables $t=g(\tau)$, $g\in\mathcal{G}$, shows that  
$$
v(\tau):=u(g(\tau))=\Phi(g(\tau),L)u_0=\int_0^\infty T_a u_0\,\mathcal{P}_{A(g(\tau))}(da)
$$
solves the time-stretched equation
\begin{align}\label{eq:decoratedEvEq}
v(\tau)=u_0+\int_0^\tau\kappa_g(\tau,\sigma) Lv(\sigma)d\sigma,\qquad\tau>0,
\end{align}
where the kernel $\kappa_g$ is defined via
\begin{align*}%\label{eq: k decorated}
\kappa_g(\tau,\sigma):=k(g(\tau),g(\sigma))\dot{g}(\sigma).
\end{align*}
In particular, any stochastic representation of a solution $u(t)$ of equation~\eqref{eq:EvEq-general} induces the corresponding stochastic representation for a solution $v(t)$ of the  time-stretched equation\eqref{eq:decoratedEvEq} for the whole class  $\mathcal{G}$ of time-stretchings.
\end{remark}

%%%%%%%%%%%%%%%%%%%%%%%%%%%%%%%%%%%%%%%%%%%%%%%%%%%%%%%%%%%%%%%%%%%%%%%%%%%%%%%%%%%%%%%%%%%%%%%%%%%%%%%%%%%%%%%%%%%%
%%%%%%%%%%%%%%%%%%%%%%%%%%%%%%%%%%%%%%%%%%%%%%%%%%%%%%%%%%%%%%%%%%%%%%%%%%%%%%%%%%%%%%%%%%%%%%%%%%%%%%%%%%%%%%%%%%%%%
\section{Feynman-Kac formulae based on randomely scaled Gaussian processes and further remarks}\label{sec:Examples}

Due to Corollary~\ref{cor:FKFforMarkovProcesses+V} and Corollary~\ref{cor:subordinate case}, the most natural stochastic reprensentations for evolution equations of the form~\eqref{eq:EvEq-general} with $L$ being (a Bernstein function of) the generator of a Markov process (plus a potential term)  are given in terms of  time-changed Markov processes. %In some special cases,  one may use some other processes in the obtained  stochastic representations. In particular,
In the special case when the memory kernel $k$ is homogeneous,  one may sometimes use  randomly scaled Gaussian processes (or other randomly scaled processes which are self-similar and have stationary increments) in the obtained stochastic representations. Let us discuss this case. For this recall first a class of memory kernels $k$ from~\cite{bebu1} which satisfy Assumptions~\ref{ass:k}-\ref{ass:Phi} and are homogeneous.

\begin{example}\label{example: k}
 Let  $b>0$, $a\geq b$, $\mu\geq \frac{b}{a}-1$, $\nu>\max\left\{a-b,-a\mu   \right\}$. Consider the Marichev-Saigo-Maeda kernel (cf. Sec.~4 in~\cite{bebu1})
\begin{align}\label{eq:k-Saigo-Maeda}
k(t,s):=\frac{a}{\Gamma(b/a)}(t^a-s^a)^{\frac{b}{a}-1}t^{a-\nu}s^{\nu-1} F_3\left(\frac{\nu}{a}-1,\frac{b}{a},1,\mu, \frac{b}{a}, 1-\left(\frac{s}{t}\right)^a,1-\left(\frac{t}{s}\right)^a\right),
\end{align}
where $0<s<t$ and  $F_3$ is Appell's third generalization   of the Gauss hypergeometric function: $\alpha,\alpha',\beta,\beta',\gamma\in\cC$, $\gamma \notin -\mathbb{N}$,
\begin{align*}
F_3\left(\alpha,\alpha',\beta,\beta',\gamma,x,y\right):=\sum_{m,n\geq 0} \frac{(\alpha)_m(\beta)_m(\alpha')_n(\beta')_n}{(\gamma)_{m+n} n!m!} x^my^n, 
\end{align*}
\begin{align*}
(\delta)_\nu:=\left\{\begin{array}{lll}
1, & \nu=0, & \delta\in\cC\\
\delta(\delta-1)\cdot\ldots\cdot(\delta+n-1), & \nu=n\in\Nat, & \delta\in\cC.
\end{array}\right.
\end{align*}
The kernel $k$  is homogeneous of degree $b-1$ and satisfies Assumption~\ref{ass:k} (cf. Theorem~4 in~\cite{bebu1}). 
 The corresponding function $ \Phi$   has the following form:
\begin{align}\label{eq:Phi Saigo-Maeda}
  \Phi(t,\lambda)= \Gamma(q_2) E_{q_1,q_2}^{q_3}(\lambda t^b),
 \end{align}
 where
\begin{align}\label{eq:lambda 1-2-3}
q_1:=\frac{b}{a},\quad q_2:=\frac{\nu}{a}+\mu,\quad q_3:= 1+\frac{\nu-a}{b},
\end{align}
and  $E_{q_1,q_2}^{q_3}$ is the three parameter Mittag-Leffler (or Prabhakar) function\footnote{The function $E_{q_1,q_2}^{q_3}$ is well-defined on the whole $\cC$ for $\Re q_1>0$ and is an entire function. }
$
E_{q_1,q_2}^{q_3}(\lambda):=\sum_{n=0}^\infty \frac{\left(q_3\right)_n}{\Gamma\left(q_1 n+q_2\right)n!}\,\lambda^n.
$
Under our assumptions on the parameters, the function $\Phi(t,-\cdot)$ is completely monotone and hence Assumption~\ref{ass:Phi} is fulfilled. As corresponding random variables $(A(t))_{t\geq0}$ one may take $A(t):=A_{b,a,\mu,\nu}t^b$, where $A_{b,a,\mu,\nu}$ is a non-negative random variable with Laplace transform $\Gamma(q_2) E_{q_1,q_2}^{q_3}(-\lambda)$. 
 
  Let now $\alpha\in(0,2)$, $\beta\in(0,1]$. In the special case   $b:=\alpha$,  $a:=\frac{\alpha}{\beta}$, $\nu:=a$, $\mu:=0$, the Marichev-Saigo-Maeda kernel~\eqref{eq:k-Saigo-Maeda} reduces to the kernel which appears in the governing equation for generalized grey Brownian motion:
\begin{align}\label{eq:GGBM k}
k(t,s):=\frac{\alpha}{\beta\Gamma(\beta)}s^{\frac{\alpha}{\beta}-1}\left(t^{\frac{\alpha}{\beta}}-s^{\frac{\alpha}{\beta}}  \right)^{\beta-1},\qquad\beta\in(0,1],\quad\alpha\in(0,2).
\end{align}
The corresponding function $\Phi$ in~\eqref{eq:Phi Saigo-Maeda} reduces to the classical Mittag-Leffler function: 
%$E_\beta(z):=\sum_{n=0}^\infty \frac{z^n}{\Gamma(\beta n+1)}$. Namely, 
$\Phi(t,\lambda)=E_\beta(\lambda t^\alpha)$.
 And, as the corresponding random variables $(A(t))_{t\geq0}$, one may take $A(t):=A_{\beta}t^\alpha$, where $A_{\beta}$ is a non-negative random variable with Laplace transform $E_\beta(-\cdot)$. For $\beta\in(0,1)$, such random variable $A_\beta$
has %PDF
probability density function 
 $M_\beta\1_{[0,\infty)}$, where  $M_\beta(z):=\sum_{n=0}^\infty \frac{(-z)^n}{n! \Gamma(-\beta n+(1-\beta))}$ is  the Mainardi-Wright function. Generally, probability density function of $A_{b,a,\mu,\nu}$ (when exists) is given in terms of Fox $H$-functions (cf. Remark~10 of~\cite{bebu1}).
\end{example}

Let us now present some  Feynman-Kac formulae for evolution equations of type~\eqref{eq:EvEq-general} with homogeneous kernel $k$ on the base of randomely scaled Gaussian processes.

\begin{example}\label{example:with drift}
(i) Under the assumptions of Corollary~\ref{cor:subordinate case} consider the Bernstein function $f(\lambda):=\lambda^\gamma$, $\gamma\in(0,1]$. Then the operator $L^f$ is the fractional power of the operator $L$, i.e. $L^f=-(-L)^\gamma$ (cf.~\cite{MR0399953,MR2978140}), and $(\eta^f_t)_{t\geq0}$ is a $\gamma$-stable subordinator. Let $k$ be homogeneous of degree $\theta-1$ for some $\theta>0$ and take $A(t)=At^\theta$ according to Corollary~\ref{cor:subordinate case}~(iii). Then the random variable $\eta^f_{A(t)}$ has the same distribution as $A^{1/\gamma}\eta^f_1 t^{\theta/\gamma}$. We may replace the ``subordinator''  $\left(\eta^f_{A(t)}\right)_{t\geq0}$ in~\eqref{eq:FKF+subordinate} by a new ``subordinator'' $\left(\aA t^{\theta/\gamma}\right)_{t\geq0}$ with 
\begin{align}\label{eq:new RV}
\aA:=A^{1/\gamma}\eta^f_1.
\end{align}
 This allows to split randomness and time-dependence in the random time-change. Thus, we obtain the following Feynman-Kac formula 
\begin{align}\label{eq:goodFKF}
u(t,x):&=\E^x\left[u_0\left(\xi_{\aA t^{\theta/\gamma}}  \right) \exp\left(\int_0^{\aA t^{\theta/\gamma}} V(\xi_s)ds\right) \right]\\
&
=\E^x\left[u_0\left(\xi_{\aA t^{\theta/\gamma}}  \right) \exp\left({\aA\frac{\theta}{\gamma}\int_0^{t} s^{\frac{\theta}{\gamma}-1} V(\xi_{\aA s^{\theta/\gamma}})ds}\right) \right]\nonumber
\end{align}
for the evolution equation
\begin{align*}
u(t,x)=u_0(x)-\int_0^t k(t,s) \left(-L_0-V\right)^\gamma u(s,x)ds.
\end{align*}

\noindent (ii)  Let $k$, $A$, $(\eta^f_t)_{t\geq0}$ and  $\aA$ be as in part (i) of this example.  Let $V:=c$ for some $c\leq0$, $\xi_t:=x+B_t+wt$ under $\PP^x$, where $(B_t)_{t\geq0}$ is a standard $d$-dimensional Brownian motion, which is independent from $A$ and $(\eta^f_t)_{t\geq0}$, and $w\in\cRd$ is some fixed vector. Let $X^{\aA,\gamma,\theta}_t:=B_{\aA t^{\theta/\gamma}}$ or $X^{\aA,\gamma,\theta}_t:=\sqrt{\aA} B_{ t^{\theta/\gamma}}$, or, if $H:=\frac{\theta}{2\gamma}\in(0,1)$,  $X^{\aA,\gamma,\theta}_t:=\sqrt{\aA} B^H_{ t}$, where $\left(B^{H}_t  \right)_{t\geq0}$ is  a $d$-dimensional fractional Brownian motion\footnote{Recall that a 1-dimensional fractional Brownian motion $(B^H_t)_{t\geq 0}$ with Hurst parameter $H\in(0,1)$ is a centered Gaussian process with covariance structure ${\E}[B^H_tB_s^H]=\frac{1}{2}(t^{2H}+s^{2H}-|t-s|^{2H})$. A $d$-dimensional fractional Brownian motion with Hurst parameter $H$ is a vector of $d$ independent 1-dimensional ones.}  with Hurst parameter $H$ which is independent from $A$ and $(\eta^f_t)_{t\geq0}$.  Note that  all three options of the process $(X^{\aA,\gamma,\theta}_t)_{t\geq0}$ have the same one-dimensional marginal distributions.   Then, due to Feynman-Kac formula~\eqref{eq:goodFKF},  
\begin{align}\label{eq:FFK-BrownianM+subordination}
u(t,x)=\E\left[u_0\left(x+X^{\aA,\gamma,\theta}_t+\aA wt^{\theta/\gamma}  \right)e^{c\aA t^{\theta/\gamma}}  \right],%\qquad  \aA:=A^{1/\gamma}\eta^f_1, 
\end{align}
solves the evolution equation
\begin{align}\label{eq:EvEq-elliptic L + gamma}
u(t,x)=u_0(x)-\int_0^t k(t,s) \left(-\frac12\Delta-w\nabla-c\right)^\gamma u(s,x) ds.
\end{align}
 Therefore, we have obtained  a Feynman-Kac formula~\eqref{eq:FFK-BrownianM+subordination}  for the evolution equation~\eqref{eq:EvEq-elliptic L + gamma} in terms of two different classes of  randomly scaled Gaussian processes: randomly scaled slowed-down / speeded-up Brownian motion $\left( \sqrt{\aA} B_{ t^{\theta/\gamma}} \right)_{t\geq0}$ and (if $H:=\frac{\theta}{2\gamma}\in(0,1)$)    randomly scaled fractional Brownian motion $\left( \sqrt{\aA} B^H_{ t} \right)_{t\geq0}$.
%Again, 
If $k$ is a Marichev-Saigo-Maeda kernel~\eqref{eq:k-Saigo-Maeda}  then $\theta=b$, $A=A_{b,a,\mu,\nu}$ in distribution. %(in the setting of  Example~\ref{example: k}). %~(ii));
  In the special case of the GGBM-kernel~\eqref{eq:GGBM k}, we have $\theta=\alpha$, $A=A_\beta$ in distribution, %(in the setting of Example~\ref{example: k})
   and  hence we may use generalized grey Brownian motion in formula~\eqref{eq:FFK-BrownianM+subordination} as the process $(X^{\aA,\gamma,\theta}_t)_{t\geq0}$.%~(i)).

%\noindent (iii) \yk{to mention that similar Feynman-Kac formulae are possible on the base of linear fractional Levy motion (cf. Sec. 5 of BeBu-1)???}
 \end{example}

% In this case, one can easily check that
%$$
%\Phi(t,\lambda)=\Phi(1,\lambda t^\theta),\quad t\geq 0, \;\lambda\in \mathbb{C},
%$$
% and that we may take  $A(t)=At^\theta$ in Assumption \ref{ass:Phi}, whenever $A$ is a nonnegative random variable with Laplace transform given by $\Phi(1,-\cdot)$, see also \cite{bebu1} or Theorem~\ref{thm:1}~(iii) below.

The result of Example~\ref{example:with drift}~(ii) can be generalized beyond the case of a constant diffusion coefficient, as detailed in the case of dimension $d=1$ in space in the following proposition. As can be seen from the proof,  this generalization requires to move from a Brownian motion to a stochastic differential driven by a Brownian motion in the Stratonovich sense in order to apply Corollary \ref{cor:subordinate case}.

\begin{proposition}\label{prop:CB}
Let $\gamma\in(0,1]$ and suppose the kernel $k$ is homogeneous of order $\theta-1$ for some $\theta>0$ and  Assumption~\ref{ass:k}, Assumption~\ref{ass:Phi} are satisfied. Let  $\aA$ be a non-negative random variable constructed by~\eqref{eq:new RV} in Example~\ref{example:with drift}~(i).  Assume  $w\in\cR$,  $c\leq0$, and $\sigma\in C^2(\cR)$ is a bounded function %\ykm{$\sigma$ bdd to assure the generation of a $C_0$-semigroup}
  with bounded first and second derivatives.  Consider the linear operator $(L_{(\sigma,w)},\Dom(L_{(\sigma,w)}))$  in $C_\infty(\cR)$ which is defined by %be the closure  of the operator $(L_{(\sigma,w)},C^\infty_c(\cR))$ in $C_\infty(\cR)$ given by
\begin{align*}
&L_{(\sigma,w)}\ffi(x):=\frac{\sigma^2(x)}{2}\frac{d^2}{d x^2}\ffi(x)+\left(w+\frac12\sigma'(x)  \right)\sigma(x)\frac{d }{d x}\ffi(x),\qquad \ffi\in \Dom(L_{(\sigma,w)}),\\
&
\Dom(L_{(\sigma,w)}):=\left\{ \ffi\in C^2(\cR)\,\,:\,\, \ffi,\, L_{(\sigma,w)}\ffi\in  C_\infty(\cR)\right\}.
\end{align*}
Let $u_0\in \Dom(L_{(\sigma,w)})$ and denote by $g_\sigma\,:\,[0,\infty)\times\cR\to\cR$  the solution to the parametrized family of ODEs
\begin{align}\label{eq:parametrizedODEs}
\frac{\pd}{\pd y}g_\sigma(y,x)=\sigma(g_\sigma(y,x)),\qquad g_\sigma(0,x)=x.
\end{align}
Let $(B_t)_{t\geq0}$ be a standard Brownian motion independent from $\aA$. % and $(\eta^f_t)_{t\geq0}$.
 Let $X^{\aA,\gamma,\theta}_t:=B_{\aA t^{\theta/\gamma}}$ or $X^{\aA,\gamma,\theta}_t:=\sqrt{\aA} B_{ t^{\theta/\gamma}}$, or, if $H:=\frac{\theta}{2\gamma}\in(0,1)$,  $X^{\aA,\gamma,\theta}_t:=\sqrt{\aA} B^H_{ t}$, where $\left(B^{H}_t  \right)_{t\geq0}$ is  a $1$-dimensional fractional Brownian motion  with Hurst parameter $H$ which is independent from $\aA$. % and $(\eta^f_t)_{t\geq0}$.
Then 
\begin{align}
u(t,x)&=\E\left[ u_0\left(g_\sigma\left(X^{\aA,\gamma,\theta}_t+w\aA t^{\theta/\gamma},x \right)  \right) e^{c\aA t^{\theta/\gamma}} \right]\label{eq:FKF-CB+drift}\\
&
=\E\left[ u_0\left(g_\sigma\left(X^{\aA,\gamma,\theta}_t,x \right)  \right) e^{\aA t^{\theta/\gamma}\left(c-\frac{w^2}{2}  \right)+wX^{\aA,\gamma,\theta}_t} \right]\label{eq:FKF-CB-drift}.
\end{align}
solves the evolution equation
\begin{align}\label{eq:EvEq-CB}
u(t,x)=u_0(x)-\int_0^t k(t,s)\left(- L_{(\sigma,w)}-c \right)^\gamma u(s,x)ds.
\end{align}
\end{proposition}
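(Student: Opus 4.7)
My plan is to realise $L_{(\sigma,w)}$ as the infinitesimal generator of an explicit one-dimensional Feller process and then apply Corollary~\ref{cor:subordinate case}~(ii) with the constant (non-positive) potential $V\equiv c$ and the Bernstein function $f(\lambda):=\lambda^\gamma$. The natural candidate for the underlying process, suggested by $g_\sigma$, is $\xi_t^x:=g_\sigma(B_t+wt,x)$, i.e.\ the strong solution of the Stratonovich SDE $dY_t=\sigma(Y_t)\circ dB_t+w\sigma(Y_t)\,dt$. Setting $\psi_x(z):=\ffi(g_\sigma(z,x))$ and using $\partial_y g_\sigma=\sigma\circ g_\sigma$ together with It\^o's formula, I would compute
\begin{align*}
\lim_{t\searrow 0}\tfrac{1}{t}\bigl(\E[\ffi(\xi_t^x)]-\ffi(x)\bigr)=w\psi_x'(0)+\tfrac{1}{2}\psi_x''(0)=\tfrac{\sigma^2(x)}{2}\ffi''(x)+\bigl(w+\tfrac{1}{2}\sigma'(x)\bigr)\sigma(x)\ffi'(x)=L_{(\sigma,w)}\ffi(x)
\end{align*}
for $\ffi\in\Dom(L_{(\sigma,w)})$, thereby identifying the generator on the prescribed domain.

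Second, I would check that $T_t\ffi(x):=\E[\ffi(\xi_t^x)]$ is a strongly continuous Feller semigroup on $C_\infty(\cR)$, i.e.\ that $T_t\ffi\in C_\infty(\cR)$ and $\|T_t\ffi-\ffi\|_\infty\to 0$ as $t\searrow 0$. Since $\sigma,\sigma',\sigma''$ are bounded, $g_\sigma$ is globally defined, jointly continuous in $(y,x)$ and uniformly Lipschitz in $x$ on compacts in $y$, so both properties follow from dominated convergence. Corollary~\ref{cor:subordinate case}~(ii), applied with $L_0:=L_{(\sigma,w)}$, $V\equiv c\le0$, $f(\lambda):=\lambda^\gamma$ (so $(L_0+V)^f=-(-L_{(\sigma,w)}-c)^\gamma$), then yields that
\begin{align*}
u(t,x)=\E^x\bigl[u_0(\xi_{\eta^f_{A(t)}})\exp(c\,\eta^f_{A(t)})\bigr]
\end{align*}
solves~\eqref{eq:EvEq-CB}. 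Invoking part~(iii) of the same corollary and the rewriting carried out in Example~\ref{example:with drift}~(i), I would then replace $\eta^f_{A(t)}$ by $\aA t^{\theta/\gamma}$ in law. Because only the one-dimensional distribution of $B_{\eta^f_{A(t)}}$ enters the formula through $\xi_{\eta^f_{A(t)}}=g_\sigma(B_{\eta^f_{A(t)}}+w\eta^f_{A(t)},x)$, I may replace $B_{\eta^f_{A(t)}}$ by any of the three processes $X^{\aA,\gamma,\theta}_t$ listed in the statement, each of which has the same conditional law given $\aA$. This delivers~\eqref{eq:FKF-CB+drift}.

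Finally, \eqref{eq:FKF-CB-drift} would follow from \eqref{eq:FKF-CB+drift} by a conditional Cameron--Martin shift: given $\aA$, the random variable $X^{\aA,\gamma,\theta}_t$ is centered Gaussian with variance $\aA t^{\theta/\gamma}$, so for any bounded measurable $F$,
\begin{align*}
\E\bigl[F(X^{\aA,\gamma,\theta}_t+w\aA t^{\theta/\gamma})\,\big|\,\aA\bigr]=\E\bigl[F(X^{\aA,\gamma,\theta}_t)\exp\bigl(wX^{\aA,\gamma,\theta}_t-\tfrac{w^2}{2}\aA t^{\theta/\gamma}\bigr)\,\big|\,\aA\bigr].
\end{align*}
Applied with $F(z):=u_0(g_\sigma(z,x))$ and combined with the factor $e^{c\aA t^{\theta/\gamma}}$, this yields~\eqref{eq:FKF-CB-drift}. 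The step I expect to be the main obstacle is the second one: rigorously verifying that $(T_t)_{t\ge 0}$ is a strongly continuous semigroup on $C_\infty(\cR)$ whose generator agrees with $(L_{(\sigma,w)},\Dom(L_{(\sigma,w)}))$ as defined in the statement, so that Corollary~\ref{cor:subordinate case}~(ii) can be invoked on precisely the prescribed class of initial data $u_0$; once this is established, the rest reduces to Gaussian calculus and to the rewriting already worked out in Example~\ref{example:with drift}.
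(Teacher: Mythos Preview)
Your proposal is correct and follows essentially the same route as the paper: identify $\xi_t^x=g_\sigma(B_t+wt,x)$ as the (Doss--Sussmann) solution of the Stratonovich SDE $d\xi_t=\sigma(\xi_t)\circ dB_t+w\sigma(\xi_t)\,dt$, apply Corollary~\ref{cor:subordinate case}~(ii)--(iii) with $V\equiv c\le 0$ and $f(\lambda)=\lambda^\gamma$ together with the rewriting of Example~\ref{example:with drift}~(i), and then pass from \eqref{eq:FKF-CB+drift} to \eqref{eq:FKF-CB-drift} by the Gaussian shift (the paper writes this out as an explicit density computation, which is exactly your conditional Cameron--Martin identity). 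The step you flag as the main obstacle---that $(L_{(\sigma,w)},\Dom(L_{(\sigma,w)}))$ generates a strongly continuous semigroup on $C_\infty(\cR)$ and that this semigroup is the transition semigroup of $(\xi_t)$---is not verified directly in the paper either but dispatched by citation (Lunardi for generation, Karatzas--Shreve, Remark~5.2.22, for the generator of the Stratonovich SDE).
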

The proof of Proposition~\ref{prop:CB} will be given in Section~\ref{sec:proofs}.
\begin{remark}
Let  $H:=\frac{\theta}{2\gamma}\in(0,1)$ and $X^{\aA,\gamma,\theta}_t:=\sqrt{\aA} B^H_{ t}$, where $\left(B^{H}_t  \right)_{t\geq0}$ is  a $1$-dimensional fractional Brownian motion  with Hurst parameter $H$ %which is independent from $A$ and $(\eta^f_t)_{t\geq0}$.
as in Proposition~\ref{prop:CB}. 
 We now interpret the Feynman-Kac  formula~\eqref{eq:FKF-CB-drift} from the point of view of stochastic differential equations with respect to $\left( X^{\aA,\gamma,\theta}_t \right)_{t\geq0}$ in the rough path sense. Assume $H>1/3$. Then almost every path of $\left( X^{\aA,\gamma,\theta}_t \right)_{t\geq0}$ is H\"older continuous with some index larger than $1/3$.  Let $\mathbb{X}_{t,s}:=\frac12\left(X^{\aA,\gamma,\theta}_t-X^{\aA,\gamma,\theta}_s  \right)^2$. Then $\mathcal{X}:=(X^{\aA,\gamma,\theta}, \mathbb{X})$ is a lift to a geometric rough path (see~\cite{MR4174393}). Consider  $Z^x_t:=g_\sigma\left(X^{\aA,\gamma,\theta}_t,x \right)$. Then, by the It\^{o} formula for geometric rough paths, see again \cite{MR4174393},
 \begin{align}\label{eq: rough SDE}
 Z^x_t=x+\int_0^t\sigma(Z^x_s)dX^{\aA,\gamma,\theta}_s,
\end{align}  
since $g_\sigma\in C^3(\cR)$. Hence, the stochastic representation for the solution of \eqref{eq:EvEq-CB} in \eqref{eq:FKF-CB-drift} can be rewritten in the form
$$
u(t,x)=\E\left[ u_0\left(Z^x_t\right) e^{\aA t^{\theta/\gamma}\left(c-\frac{w^2}{2}  \right)+wX^{\aA,\gamma,\theta}_t} \right]
$$
This form resembles the classical Feynman-Kac formula for parabolic Cauchy problems in terms of stochastic differential equations driven by a Brownian motion. However, the stochastic differential equation \eqref{eq: rough SDE}  is driven by a randomly scaled fractional Brownian motion, which is neither a semimartingale nor a Markov process (unless $H=1/2$), to account for the memory kernel and the space fractionality in \eqref{eq:EvEq-CB}, while maintaining the stationary increment property of the driving process.
\end{remark}

\bigskip
\newpage
Let us now discuss evolution equations of the form~\eqref{eq:EvEq-general} in the special case, when the  kernel $k$  is of convolution type.

\begin{remark}
%\begin{example}\label{example: k convol}
 Suppose the kernel $k$ has the form   $k(t,s):=\mathfrak{K}(t-s)$, where $\mathfrak{K}:(0,\infty)\rightarrow \mathbb{R}$ is continuous and satisfies $
 |\mathfrak{K}(t)|\leq M t^{\beta-1}e^{\gamma t},\quad t>0,
 $
 for some constants $M,\gamma\geq 0$ and $\beta \in (0,1]$. Let $(\mathcal{L}\mathfrak{K})(\cdot)$ be the Laplace transform of $\mathfrak{K}$.  By  Theorem~3 in~\cite{bebu1}, if there exists  a nonnegative stochastic process $(A(t))_{t\geq 0}$ such that almost all its paths are   right-continuous with left limits and   such that  
 \begin{align}
 \int_0^\infty e^{-\sigma t} \mathbb{E}\left[e^{-\lambda A(t)}\right]dt= \frac{1}{\sigma}\frac{1}{1+\lambda (\mathcal{L}\mathfrak{K})(\sigma)}
\end{align}
 for every $\lambda\geq 0$ and sufficiently large $\sigma\geq \sigma_0(\lambda)$, then the function $\Phi(t,-\cdot)$ constructed from $k(t,s)=\mathfrak{K}(t-s)$ by~\eqref{eq:Phi},~\eqref{eq:coeff of Phi} is completely monotone for every $t\geq 0$ and the above process $(A(t))_{t\geq0}$ satisfies~\eqref{eq:A(t)}. In particular, consider the case when   $\mathcal{L}\mathfrak{K}=1/h$ for some Bernstein function $h$. Then $h$ is the Laplace exponent of some L\'{e}vy subordinator $(\eta^h_t)_{t\geq0}$. The corresponding inverse subordinator $(\mathcal{E}^h_t)_{t\geq0}$  is defined via $\mathcal{E}^h_t:=\inf\left\{ s>0\,:\,\eta^h_s>t\right\}$.   It has been shown in~\cite{MR2442372} (formula~(3.14)) that (in the case when the L\'{e}vy measure $\nu$ of $(\eta^h_t)_{t\geq0}$ satisfies $\nu(0,\infty)=\infty$) the double Laplace  transform %$\mathfrak{L}$ 
of the distribution $\mathcal{P}_{\mathcal{E}^h_t}(da)$ with respect to both time and space variables is equal to
$$
\int_0^\infty e^{-\sigma t} \mathbb{E}\left[e^{-\lambda \mathcal{E}^h_t}\right]dt=\frac{h(\sigma)}{\sigma(h(\sigma)+\lambda)}= \frac{1}{\sigma}\frac{1}{1+\lambda (\mathcal{L}\mathfrak{K})(\sigma)}.
$$
Hence, one may take $A(t):=\mathcal{E}^h_t$, $t\geq0$, in this case. Note that the assumption $\nu(0,\infty)=\infty$ guarantees that the sample paths of $(\eta^h_t)_{t\geq0}$ are a.s. strictly increasing, i.e. almost all paths $t\mapsto \mathcal{E}^h_t$ are continuous (cf. \cite{MR3373947}). %\ykm{The blue text is a quoutation from~\cite{MR3373947}, I need some time to check it.} 
\end{remark}

\begin{example}
Let us mention the following functions $\mathfrak{K}_1$  and $\mathfrak{K}_2$  providing admissible  kernels $k$ of convolution type and having Laplace transform $1/h$ for some Bernstein function $h$ (cf.~\cite{MR4223052}): for $1\geq\beta>\beta_1>\ldots>\beta_m>0$, $b_j>0$, $j=1,\ldots,m$
\begin{align*}
\mathfrak{K}_1(t):=\frac{t^{\beta-1}}{\Gamma(\beta)}+\sum_{j=1}^mb_j\frac{t^{\beta_j-1}}{\Gamma(\beta_j)} 
\end{align*}
 with the corresponding Bernstein function $h_1(\sigma):=\left(\sigma^{-\beta}+\sum_{j=1}^m b_j\sigma^{-\beta_j}  \right)^{-1}$ and
\begin{align*}
\mathfrak{K}_2(t):=t^{\beta-1}{E}_{(\beta-\beta_1,\ldots,\beta-\beta_m),\beta}\left( -b_1 t^{\beta-\beta_1},\ldots,-b_m t^{\beta-\beta_m} \right)
\end{align*}
with the multinomial Mittag-Leffler function~\cite{MR1467147,MR1366608} (for $z_j\in\cC$, $\beta\in\cR$, $\alpha_j>0$, $j=1,\ldots,m$)
\begin{align*}
{E}_{(\alpha_1,\ldots,\alpha_m),\beta}(z_1,\ldots,z_m):=\sum_{n=0}^\infty\sum_{\begin{array}{c}
n_1+\ldots+n_m=n\\
n_1\in\Nat_0,\ldots,n_m\in\Nat_0
\end{array}}
\frac{n!}{n_1!\cdots n_m!}\frac{\prod_{j=1}^m z_j^{n_j}}{\Gamma\left( \beta+\sum_{j=1}^m \alpha_jn_j  \right)}.
\end{align*} 
The kernel $\mathfrak{K}_2$ corresponds to the Bernstein function $h_2(\sigma):=\sigma^{\beta}+\sum_{j=1}^mb_j\sigma^{\beta_j}$. The corresponding functions $\Phi_1(t,-\lambda)$ and $\Phi_2(t,-\lambda)$  are found in~\cite{MR4223052} in terms of the multinomial Mittag-Leffler  function:
\begin{align*}
&\Phi_1(t,-\lambda):= {E}_{(\beta,\beta_1,\ldots,\beta_m),1}\left(-\lambda t^\beta,-\lambda t^{\beta_1},\ldots,-\lambda t^{\beta_m}  \right),\\
&
\Phi_2(t,-\lambda):=1-\lambda t^{\beta}E_{(\beta,\beta-\beta_1,\ldots,\beta-\beta_m),\beta+1}\left(-\lambda t^\beta,-\lambda t^{\beta_1},\ldots,-\lambda t^{\beta_m}  \right).
\end{align*}
\end{example}

\begin{remark}
Note that, in the case $k(t,s)=\mathfrak{K}(t-s)$ with $\mathcal{L}\mathfrak{K}=1/h$ for some Bernstein function  $h$, evolution equation~\eqref{eq:EvEq-general} is equivalent (what can be shown by applying the Laplace transform w.r.t. time-variable to both equations) to the Cauchy problem  
\begin{align}\label{eq:Caputo form}
\mathcal{D}^h_t u(t,x)=Lu(t,x),\qquad u(0,x)=u_0(x),\qquad x\in\cRd,\quad t>0,
\end{align}
where $\mathcal{D}^h_t$ is a generalized time-fractional derivative of Caputo type, which is defined (for sufficiently good functions $v\,:\,(0,\infty)\to\cR$ of time variable $t$) via the Laplace transform (cf.~\cite{MR4098657}) by
\begin{align*}
\left(\mathcal{L}\left[\mathcal{D}^h_t v\right]\right)(\sigma)=h(\sigma)(\mathcal{L}v)(\sigma)-\frac{h(\sigma)}{\sigma}v(+0).
\end{align*}
Therefore, the results of  Theorem~\ref{thm:1} and Corollaries~\ref{cor:FKFforMarkovProcesses+V},~\ref{cor:subordinate case}    provide  solutions  for evolution equations of the form~\eqref{eq:Caputo form} with  generalized time-fractional derivatives of Caputo type $\mathcal{D}^h_t$. % which are usually considered in the literature.
 In the case $h(\sigma):=\sigma^\beta$, $\beta\in(0,1)$, the generalized time-fractional derivative $\mathcal{D}^h_t$ coincides with the Caputo derivative of order $\beta$. % and corresponds to the kernel $\mathfrak{K}(t):=t^{\beta-1}/\Gamma(\beta)$. 
The kernel $\mathfrak{K}_1$ corresponds to  a  mixture of Caputo time-fractional derivatives of orders $\beta$, $\beta_1,\ldots,\beta_m$. In the case of Bernstein function $h(\sigma):=\int_0^1 \sigma^\beta\mu(d\beta)$ with a finite Borel measure $\mu$ concentrated on the interval $(0,1)$, the corresponding derivative $\mathcal{D}^h_t$ is known as \emph{distributed order fractional derivative}. 
\end{remark}

%%%%%%%%%%%%%%%%%%%%%%%%%%%%%%%%%%%%%%%%%%%%%%%%%%%%%%%%%%%%%%%%%%%%%%%%%%%%%%%%%%%%%%%%%%%%%%%%%%%%%%%%%%%%%%%%%%%%%%

\section{Proofs}\label{sec:proofs}
\begin{proof}[Proof of Theorem~\ref{thm:1}]
(i) Let Assumptions~\ref{ass:L},~\ref{ass:k} and~~\ref{ass:Phi} hold. Since the function $\Phi(t,\cdot)$, $t\geq0$, is entire by Theorem 1 in \cite{bebu1}, the function $\Phi(t,i(\cdot))$ is also entire and is the characteristic function of the distribution $\mathcal{P}_{A(t)}$, which is concentrated on $[0,\infty)$. Therefore, we have by the Raikov theorem (cf. Theorem~3.2.1 in~\cite{MR0125601}) 
\begin{align}\label{eq:estimate}
\int_\cR e^{r|a|}\mathcal{P}_{A(t)}(da)=\int_0^\infty e^{ra}\mathcal{P}_{A(t)}(da)<\infty\qquad\forall\,\, r>0.
\end{align}  
Further, for any strongly continuous semigroup $(T_t)_{t\geq0}$ there exist constants $M\geq1$, $c\geq0$ such that $\|T_t\|\leq M e^{ct}$,  $\forall\,t\geq0$, and the mapping $t\mapsto T_t\ffi$ is continuous for any $\ffi\in X$. Thus,  we have $\int_0^\infty\|T_a\ffi\|_X \mathcal{P}_{A(t)}(da)<\infty$ and the Bochner integral in the r.h.s. of~\eqref{eq:Phi(t,L)} is well defined for any $\ffi\in X$. Moreover, the operator $\Phi(t,L)$ defined by~\eqref{eq:Phi(t,L)} is a bounded linear operator on $X$ and $\Phi(0,L)=\Id$.

\medskip

\noindent (ii) Recall that the following statement was proved in~\cite{bebu1} (cf. Corollary~1 of~\cite{bebu1}):
\begin{lemma}\label{lem:1}
Let Assumption~\ref{ass:k} hold. 
Then, for each $\lambda\in\cC$, there exists a unique solution $\Phi(\cdot,-\lambda)\in B_b([0,T],\cC)$, $\forall\,\, T>0$, 
 of the following Volterra  equation of the second kind
\begin{align}\label{eq:Volterra for Phi}
\Phi(t,-\lambda)=1-\lambda\int_0^t k(t,s)\Phi(s,-\lambda)ds,\qquad t>0.
\end{align}
Moreover, $\lim_{t\searrow0}\Phi(t,-\lambda)=1$ locally uniformly with respect to $\lambda\in\cC$,
 $\Phi(t,\cdot)$ is an entire function for  all $t\geq0$ and equalities~\eqref{eq:Phi} and~\eqref{eq:coeff of Phi} hold. 
\end{lemma}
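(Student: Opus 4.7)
The plan is to realize $\Phi(\cdot,-\lambda)$ as the Neumann series / Picard iterate associated with the Volterra equation~\eqref{eq:Volterra for Phi}, namely
\begin{align*}
\Phi(t,-\lambda) \;=\; \sum_{n\geq 0} c_n(t)\,(-\lambda)^n,
\end{align*}
with $c_n$ defined by the recursion~\eqref{eq:coeff of Phi}. All six assertions of the lemma --- well-posedness of the $c_n$, convergence of the series, entirety in $\lambda$, the Volterra identity, uniqueness in $B_b([0,T],\cC)$, and the limit $\lim_{t\searrow 0}\Phi(t,-\lambda)=1$ locally uniformly in $\lambda$ --- reduce to one quantitative estimate on $|c_n(t)|$, which I would now establish.

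Set $p:=1+\eps$ and let $q$ be its conjugate exponent. By induction on $n$, using H\"older's inequality together with Assumption~\ref{ass:k},
\begin{align*}
|c_n(t)| \;\leq\; \|k(t,\cdot)\|_{L^{p}((0,t))}\,\|c_{n-1}\|_{L^{q}((0,t))} \;\leq\; K_T\,t^{\,1/p-\alpha^*}\,\|c_{n-1}\|_{L^{q}((0,t))}.
\end{align*}
Starting from $c_0\equiv 1$, this yields polynomial bounds of the shape $|c_n(t)|\leq C_n\,t^{\,n(1-\alpha^*)}$ with recursion $C_n\leq K_T\,C_{n-1}/\bigl(q(n-1)(1-\alpha^*)+1\bigr)^{1/q}$, because $1/p+1/q=1$ accounts exactly for the newly produced power of $t$. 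Consequently $C_n$ decays at the factorial rate $C^n/((n-1)!)^{1/q}$, so that $\sum_n|c_n(t)|\,|\lambda|^n$ converges absolutely and locally uniformly on $[0,T]\times\cC$. Borel measurability of each $c_n$ in $t$ follows inductively by Fubini/Tonelli.

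From this the lemma's conclusions drop out. Local uniform convergence in $\lambda$ gives that $\Phi(t,\cdot)$ is entire and identifies the coefficients with the $c_n$; dominated convergence, with the summable majorant just obtained, justifies interchange of sum and integral and shows that the series satisfies~\eqref{eq:Volterra for Phi}. The limit $\lim_{t\searrow 0}\Phi(t,-\lambda)=1$ is immediate, since $c_0\equiv 1$ and the tail $\sum_{n\geq 1}|c_n(t)|\,|\lambda|^n$ vanishes as $t\searrow 0$ uniformly for $\lambda$ in compact sets. For uniqueness, if $w:=\Phi_1-\Phi_2\in B_b([0,T],\cC)$ solves the homogeneous equation $w(t)=-\lambda\int_0^t k(t,s)w(s)\,ds$, iterating this identity $n$ times and reusing the H\"older chain gives
\begin{align*}
|w(t)| \;\leq\; |\lambda|^n\,\|w\|_\infty\,K_T^{\,n}\,t^{\,n(1-\alpha^*)}/D_n \;\xrightarrow[n\to\infty]{}\;0,
\end{align*}
where $D_n$ is the same denominator that appears in the estimate for $|c_n|$.

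The principal obstacle is not conceptual but computational: the careful accounting of the H\"older estimates. Assumption~\ref{ass:k} allows $|k(t,\cdot)|$ to be singular at $s=0$ and controls it only by a weighted $L^{1+\eps}$ norm, so a naive $L^\infty$ bound on $c_n$ cannot produce a series that converges for every $\lambda\in\cC$. The saving feature is that each iteration spends the positive exponent $1-\alpha^*>0$ and simultaneously contributes an integer factor $j$ in the denominator through the $L^q$-integration of a polynomial; taken together these yield the factorial-type decay of $D_n$ that is the quantitative heart of the proof and forces entirety of $\Phi(t,\cdot)$ on all of $\cC$ rather than merely analyticity on a disc.
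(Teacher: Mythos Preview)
Your argument is correct: the H\"older estimate with exponents $p=1+\eps$ and $q=p/(p-1)$ yields $|c_n(t)|\le C_n\,t^{n(1-\alpha^*)}$ with $C_n=K_T\,C_{n-1}/(q(n-1)(1-\alpha^*)+1)^{1/q}$, and since the successive ratios tend to zero the coefficients decay like $K_T^n/((n-1)!)^{1/q}$, forcing the power series to have infinite radius of convergence; the remaining assertions (Volterra identity by dominated convergence, uniqueness by iterating the homogeneous equation, the limit at $t\searrow 0$) follow exactly as you outline.

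Note, however, that the paper does not prove this lemma at all: it is quoted verbatim from the companion paper~\cite{bebu1} (Corollary~1 there), and the present paper simply invokes it as an ingredient in the proof of Theorem~\ref{thm:1}. Your Picard/Neumann-series argument is the natural one and is essentially the approach taken in~\cite{bebu1}; you have in effect reconstructed the cited proof rather than diverged from it.
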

\noindent Our aim is to lift the equality~\eqref{eq:Volterra for Phi} to the level of operators $\Phi(t,L)$. To this aim we use the so-called Hille-Phillips functional calculus. Let us recall the main facts about this functional calculus (cf.~\cite{MR2244037,ISem21}). 

Let $(T_t)_{t\geq0}$ be as in Assumption~\ref{ass:L}.  Consider first the case when  $(T_t)_{t\geq0}$ is uniformly bounded (i.e. $\|T_t\|\leq M$ for some $M\geq1$ and all $t\geq0$). Denote by  $LS(\cC_+)$ the space of functions that are Laplace transforms of complex measures on  $([0,\infty),\mathscr{B}([0,\infty)))$. Let $g\in LS(\cC_+)$ and $m_g$ be the (unique) complex measure whose Laplace transform $\mathscr{L}[m_g]$ is given by $g$. One defines the operator $g(-L)$ as follows:
\begin{align}\label{eq:g(-L)}
g(-L)\ffi:=\int_0^\infty T_a\ffi\,m_g(da),\qquad\ffi\in X.
\end{align}
The right hand side of~\eqref{eq:g(-L)} is a well-defined Bochner integral and $g(-L)$ is a bounded linear operator on $X$, i.e. $g(-L)\in\mathcal{L}(X)$. The mapping $\calc_T\,:\,LS(\cC_+)\to \mathcal{L}(X)$, $g\mapsto g(-L)$,  is called the \emph{Hille-Phillips calculus} for $-L$. Note that $\calc_T$ is an algebra homomorphism and hence  $\calc_T(g_1g_2)= g_1(-L)\circ g_2(-L)=g_2(-L)\circ g_1(-L)$ and  $ \calc_T( a g_1+b g_2)= a g_1(-L)+b g_2(-L)$ for any $g_1$, $g_2\in LS(\cC_+)$,  $a,b\in\cR$.

 Consider now the case when  $(T_t)_{t\geq0}$ is of type $c\geq0$ (i.e., $\|T_t\|\leq M e^{ct}$ for some $M\geq1$, $c\geq0$ and all $t\geq0$). Then the rescaled semigroup $(T^c_t)_{t\geq0}$, $T^c_t:=T_te^{-ct}$, is uniformly bounded, strongly continuous and has generator $(L-c,\Dom(L))$. Then one may use the Hille-Phillips calculus $\calc_{T^c}$ for $-(L-c)$.  Consider now the space $LS(\cC_+-c):=\left\{g\,:\,g(\cdot-c)\in  LS(\cC_+)  \right\}$.  Let $g\in LS(\cC_+-c)$ and $m^c_g$ be the (unique) complex measure with $\mathscr{L}[m^c_g]=g(\cdot-c)$.  One defines the operator $g(-L)$ as follows:
\begin{align*} 
g(-L)\ffi:=\calc_{T^c}\big(g(\cdot-c)  \big)\ffi\equiv \int_0^\infty T^c_a\ffi\,m^c_g(da),\qquad\ffi\in X.
\end{align*}
Let now $m$ be a complex measure such that $e^{ca}m(da)$ is again a complex measure. Let $g^*:=\mathscr{L}[m]$. Then it holds
\begin{align*}
\mathscr{L}[e^{ca}m(da)](\lambda)=\int_0^\infty e^{-\lambda a}e^{ca}m(da)=g^*(\lambda-c).
\end{align*}
Hence $g^*\in LS(\cC_+-c)$  and $m^c_{g^*}(da)=e^{ca}m(da)$. Therefore, 
$$
g^*(-L)\ffi=\int_0^\infty T^c_a\ffi\,m^c_{g^*}(da)=\int_0^\infty T_a\ffi\, m(da),\qquad\ffi\in X.
$$
Thus, the operator $\Phi(t,L)$ defined in~\eqref{eq:Phi(t,L)} can be interpreted in terms of Hille-Phillips calculus as  $\calc_{T^c}\big(\Phi(t,-(\cdot-c)) \big)$ due to~\eqref{eq:estimate}.

Now we are ready to transfer equality~\eqref{eq:Volterra for Phi} to the level of operators by means of Hille-Phillips calculus.  Let $(T_t)_{t\geq0}$ be of type $c\geq0$ and $\rho(L)$ be the resolvent set of operator $L$, i.e. the resolvent operator $R_\lambda(L):=(\lambda-L)^{-1}$ is a well defined bounded operator on $X$ for each $\lambda\in\rho(L)$. Let $\gamma>c$. Hence $\gamma\in\rho(L)$. And equality~\eqref{eq:Volterra for Phi} implies the equality
\begin{align}\label{eq:(1)}
\gamma \cdot \frac{\Phi(t,-\lambda) -1}{\gamma + \lambda} = -\lambda\cdot\frac{\gamma}{\gamma+\lambda} \cdot \int_0^t k(t,s)\Phi(s,-\lambda)ds,\quad \forall t>0,\ \forall \lambda\in\mathbb{C}: \Re\lambda\geq -c.
\end{align}
Let us present each component of~\eqref{eq:(1)} as the Laplace transform of some complex measure on $([0,\infty),\mathscr{B}([0,\infty)))$.  As we have already discussed
\begin{align*}
\Phi(t,-\lambda) =  \mathscr{L}(\mathcal{P}_{A(t)})(\lambda)\overset{\calc_{T^c}}{  \longleftrightarrow}  \Phi(t,L)= \int_0^\infty T_a \,\mathcal{P}_{A(t)}(da). 
\end{align*}
Furthermore, we have with Dirac delta-measure $\delta_0$ and with exponential distribution $Exp(\gamma)$:
\begin{align*}
1&= \mathscr{L}(\delta_0)(\lambda)  \overset{\calc_{T^c}}{\longleftrightarrow} \mathscr{L}(\delta_0)(-L):= \int_0^\infty T_a^c \delta_0(da) = Id, \\
\frac{\gamma}{\gamma + \lambda} & = \mathscr{L}(Exp(\gamma))(\lambda) \overset{\calc_{T^c}}{\longleftrightarrow} 
\mathscr{L}(Exp(\gamma))(-L):= \int_0^\infty T_a^c  \gamma e^{-\gamma a}e^{c a} da\\
&
\phantom{bgfbbdgbfgrgrggegrvff} = \int_0^\infty T_a \gamma e^{-\gamma a} da = \gamma\cdot (\gamma -L)^{-1}=\gamma\cdot R_\gamma(L).
\end{align*}
Note that $R_\gamma(L)$ is a bounded operator since $\gamma\in (c,\infty)\subset\rho(L)$ (cf.~\cite{MR1721989,MR710486}) and $\|\gamma R_\gamma(L)\ffi-\ffi\|_X\to0$ as $\gamma\to\infty$ for any $\ffi\in X$. Further,
\begin{align*}
\frac{-\lambda\gamma}{\gamma+\lambda} &= -\gamma\cdot 1+ \gamma\cdot\frac{\gamma}{\gamma+\lambda} = -\gamma\cdot \mathscr{L}(\delta_0)(\lambda) + \gamma \cdot \mathscr{L}(Exp(\gamma))(\lambda) \overset{\calc_{T^c}}{\longleftrightarrow} \\
	&-\gamma\cdot\mathscr{L}(\delta_0)(-L) + \gamma \cdot \mathscr{L}(Exp(\gamma))(-L)= -\gamma\cdot Id + \gamma^2 R_\gamma (L) =: L_\gamma.
\end{align*}
Note that  $L_\gamma$ is the so-called \emph{ Yosida-Approximation} of $L$ (cf.~\cite{MR1721989,MR710486});  $L_\gamma$ is a bounded operator and $\|L\ffi-L_\gamma\ffi\|_X\to0$ as $\gamma\to+\infty$ for each $\ffi\in\Dom(L)$.\\
Without loss of generality we now assume $k(t,s)\ge 0$ (else divide into negative and nonnegative part) and define a family of measures on $([0,\infty),\mathcal{B}([0,\infty)))$ via
\begin{equation*}
\nu_t(B):= \int_0^t k(t,s) \mathcal{P}_{A(s)}(B)ds, \qquad B\in\mathcal{B}([0,\infty)).
\end{equation*}
The right hand side in the above formula is well-defined since the mapping $s\mapsto\mathcal{P}_{A(s)}(B)$ is a bounded Borel-measurable function on $[0,\infty)$ for any $B\in\mathcal{B}([0,\infty))$. Indeed, the mapping $s\mapsto\Phi(s,-\lambda)$ is Borel measurable for any $\lambda\in\cC$ due to Assumption~\ref{ass:k} and representation formulas~\eqref{eq:Phi},~\eqref{eq:coeff of Phi}. And for any $s,x\geq0$ holds (cf. Lemma~1.1 and the proof of Prop.~1.2 in \cite{MR2978140}) 
$$
\mathcal{P}_{A(s)}([0,x])=\liml_{\lambda\to\infty}\sum_{k\leq \lambda x}(-1)^k\frac{\pd^k\Phi(s,-\lambda)}{\pd \lambda^k}\frac{\lambda^k}{k!}.
$$
Further, it holds for measurable $g:[0,\infty)\to[0,\infty)$  
\begin{equation}\label{eq:(2)}
\int_0^\infty g(a) \nu_t(da) = \int_0^t k(t,s) \int_0^\infty g(a) \mathcal{P}_{A(s)}(da) ds,
\end{equation}
which can be seen via approximation of $g$ by step functions from below and the use of Beppo Levi's Theorem.
By choosing $g(a):=e^{-\lambda a}$ we see that
\begin{equation*}
\int_0^\infty e^{-\lambda a} \nu_t(da) = \int_0^t k(t,s) \int_0^\infty e^{-\lambda a} \mathcal{P}_{A(s)}(da) ds = \int_0^t k(t,s) \Phi(s,-\lambda) ds =:\Psi(t,-\lambda).
\end{equation*}
Thereby $\Psi(t,-\lambda)$ is the Laplace transform of an appropriate measure and we get the correspondence 
\begin{equation*}
\Psi(t,-\lambda) \overset{\calc_{T^c}}{\longleftrightarrow} \Psi(t,L):=\int_0^\infty T_a^c {\nu}^c_t(da)
\end{equation*}
where ${\nu}^c_t(da):=e^{c a}\nu_t(da)$. Note that ${\nu}^c_t$ is a bounded measure on  $([0,\infty),\mathscr{B}([0,\infty)))$ due to~\eqref{eq:estimate}.
Furthermore, similar to property \eqref{eq:(2)}, it holds for any Bochner-integrable function $g\,:\,[0,\infty)\to X$ 
\begin{align*}
\int_0^\infty g(a) {\nu}^c_t(da) = \int_0^t k(t,s) \int_0^\infty g(a) e^{c a} \mathcal{P}_{A(s)}(da) ds,
\end{align*}
and therefore, for any $\ffi\in X$,
\begin{align*}
\Psi(t,L)\ffi &= \int_0^\infty T_a^c \ffi\,{\nu}^c_t(da) = \int_0^t k(t,s) \int_0^\infty T_a^c\ffi\, e^{c a} \mathcal{P}_{A(s)}(da)ds\\
& = \int_0^t k(t,s) \Phi(s,L)\ffi ds. 
\end{align*}
Thereby, all components of~\eqref{eq:(1)} have been transferred. Taking everything together and using that for $u_0\in\Dom(L)$ holds (cf.~\cite{ISem21}) 
\begin{equation*}
L_\gamma \Psi(t,L)u_0 = \gamma L R_\gamma(L) \Psi(t,L) u_0 = \Psi(t,L)\gamma L R_\gamma(L) u_0 = \Psi(t,L) L_\gamma u_0,
\end{equation*}
we get 
\begin{equation}\label{(2)}
\gamma R_\gamma(L)\left(\Phi(t,L) -Id\right)u_0 = \Psi(t,L) L_\gamma u_0  \qquad \forall\, u_0\in \Dom(L).
\end{equation}
Taking the limit $\gamma\to+\infty$ we obtain  (with $\Phi(s,L) L u_0=L \Phi(s,L) u_0$ for all $u_0\in\Dom(L)$)
\begin{align*}
(\Phi(t,L)-Id)u_0 &= \Psi(t,L) L u_0 = \int_0^t k(t,s) \Phi(s,L) L u_0 ds = \int_0^t k(t,s) L \Phi(s,L) u_0 ds\\
	\Leftrightarrow \ \Phi(t,L)u_0 &= u_0 + \int_0^t k(t,s) L\Phi(s,L)u_0 ds.
\end{align*}
Therefore, the function $u(t):=\Phi(t,L)u_0$ solves evolution equation~\eqref{eq:EvEq-general} for  any $u_0\in\Dom(L)$.

For continuity at zero we evaluate equality \eqref{eq:A(t)} at $\lambda=-c-i\rho$, $\rho\in\cR$, resulting in
\begin{equation*}
\int_0^\infty e^{i\rho a} e^{c a} \mathcal{P}_{A(t)}(da) = \Phi(t,i\rho+c)\qquad \forall (t,\rho)\in[0,\infty)\times\cR.
\end{equation*}
According to Lemma \ref{lem:1}
\begin{equation*}
\lim_{t\searrow 0}\int_0^\infty e^{i\rho a} e^{c a} \mathcal{P}_{A(t)}(da) = \lim_{t\searrow 0} \Phi(t,i\rho+c) = 1 \qquad \forall \rho\in\cR,
\end{equation*}
and by L\'{e}vy's Continuity Theorem it follows that
\begin{equation*}
e^{c a}\mathcal{P}_{A(t)}(da) \xrightarrow{\text{weakly}} \delta_0(da),\qquad t\searrow0.
\end{equation*}
We now write
\begin{align*}
\|u(t)-u_0\|_X=\left\|\int_0^\infty \left(T_au_0 -u_0 \right)\mathcal{P}_{A(t)}(da)\right\|_X &\le \int_0^\infty \|T_au_0-u_0\|_X e^{-c a}e^{c a} \mathcal{P}_{A(t)}(da)\\
	 &= \int_\cR f(a)e^{c a} \mathcal{P}_{A(t)}(da),
\end{align*}
where 
\begin{equation*}
f:\cR\to\cR, \qquad a\mapsto \begin{cases} 0, &a<0 \\ \|T_au_0-u_0\|_X e^{-c a}, &a\ge0\end{cases}
\end{equation*}
is a bounded and continuous function. Now
\begin{equation*}
\lim_{t\searrow 0} \|u(t)-u_0\|_X \leq \lim_{t\searrow 0}  \int_\cR f(a)e^{c a} \mathcal{P}_{A(t)}(da) =f(0)= 0
\end{equation*}
by weak convergence and thus continuity at zero is shown.

\medskip

\noindent (iii) Let $k$ be homogeneous of order $\theta-1$ for some $\theta>0$.
By the recursion formula~\eqref{eq:coeff of Phi}  for all $t>0$, $n\in\Nat$
\begin{equation*}
c_n(t)=t^\theta\int_0^1k(1,s)c_{n-1}(ts)ds=t^{n\theta}\int_0^1k(1,s)c_{n-1}(s)ds=t^{n\theta}c_n(1).
\end{equation*}
And, thus, we have for all  $ t\geq0$, $\lambda\in\mathbb{C}$ (cf. Theorem~2 in \cite{bebu1}):
\begin{equation*}
\Phi(1,-t^\theta\lambda) = \sum_{n=0}^\infty c_n(1)\left(-t^\theta\lambda\right)^n = \sum_{n=0}^\infty t^{-n\theta}c_n(t)(-t^\theta\lambda)^n = \sum_{n=0}^\infty c_n(t)(-\lambda)^n = \Phi(t,-\lambda).
\end{equation*}
Let $A(t):=At^\theta$, where $A$ is a nonnegative random variable satisfying~\eqref{eq:A}.  Then 
\begin{equation*}
\mathscr{L}(\mathcal{P}_{A(t)})(\lambda) = \E\left[e^{-\lambda At^\theta}\right] = \mathscr{L}(\mathcal{P}_A)(\lambda t^\theta) = \Phi(1,-\lambda t^\theta) = \Phi(t,-\lambda).
\end{equation*}
Therefore, $A(t):=A t^\theta$ has the required distribution.
Theorem~\ref{thm:1} is proved.
\end{proof}

\begin{proof}[Proof of Corollary~\ref{cor:FKFforMarkovProcesses+V}]
(i) By construction $(T_t^0)_{t\geq0}$ is a strongly continuous semigroup of type $0$ on $X$. It follows from Theorem~\ref{thm:1} ii) and Fubini's theorem that
\begin{align*}
u(t,x) = \Phi(t,L_0)u_0(x)= \int_0^\infty \E^x\left[u_0(\xi_a)\right]\mathcal{P}_{A(t)}(da) = \E^x\left[u_0(\xi_{A(t)})\right]
\end{align*}
solves the evolution equation~\eqref{eq:FKF2}.
\\
(ii) $(T_t)_{t\geq0}$ is a strongly continuous semigroup of type $\max\{c,0\}$ on $X$. It follows from Theorem~\ref{thm:1} ii) and Fubini's theorem that
\begin{align*}
u(t,x) = \Phi(t,L_0+V)u_0(x)&= \int_0^\infty \E^x\left[u_0(\xi_a)exp\left(\int_0^a V(\xi_s)ds\right)\right]\mathcal{P}_{A(t)}(da) \\
	&= \E^x\left[u_0(\xi_{A(t)})exp\left(\int_0^{A(t)} V(\xi_s)ds\right)\right]
\end{align*}
solves the evolution equation~\eqref{eq:FKF3}.
\\
(iii) Follows immediately from Theorem~\ref{thm:1}~(iii).
\end{proof}

\begin{proof}[Proof of Corollary~\ref{cor:subordinate case}]
(i) $(T_t^f)_{t\geq0}$ is a strongly continuous contraction semigroup  on the Banach space $X$. Therefore, $(T_t^f)_{t\geq0},\ k$ and $\Phi$ fulfill all assumptions of Theorem~\ref{thm:1} and thus 
\begin{equation*}
\Phi(t,L^f)\ffi:=\int_0^\infty T^f_s \ffi \mathcal{P}_{A(t)}(ds),\qquad \ffi\in X,
\end{equation*}
is well-defined. 
Let now   $(A(t))_{t\geq0}$ and $(\eta^f_t)_{t\geq0}$ be as in the statement of Corollary~\ref{cor:subordinate case}.    
Consider the family of random variables $\left(\eta^f_{A(t)}\right)_{t\geq0}$. Then
\begin{align*}
\E\left[ e^{-\lambda \eta^f_{A(t)} } \right]=\int_0^\infty\E\left[ e^{-\lambda \eta^f_a} \right]\mathcal{P}_{A(t)}(da)=\int_0^\infty e^{-af(\lambda)}\mathcal{P}_{A(t)}(da)=\Phi(t,-f(\lambda)).
\end{align*}
%As described in section~\ref{sec:main}, 
Starting with  the strongly continuous contraction semigroup $(T_t)_{t\geq0}$ and the completely monotone function $\Phi^f(t,-\cdot):=\Phi(t,-f(\cdot))$, one may define
\begin{equation*}
\Phi^f(t,L) \ffi:=\int_0^\infty T_s \ffi\, \mathcal{P}_{\eta^f_{A(t)}}(ds),\qquad \ffi\in X. 
\end{equation*} 
Due to Fubini's theorem
\begin{align*}
\Phi(t,L^f)\ffi=\int_0^\infty T^f_s \ffi \,\mathcal{P}_{A(t)}(ds) &= \int_0^\infty \int_0^\infty T_a \ffi\, \mathcal{P}_{\eta^f_s} (da) \mathcal{P}_{A(t)}(ds) \\
	&= \int_0^\infty T_a \ffi\, \mathcal{P}_{\eta^f_{A(t)}}(da) = \Phi^f(t,L)\ffi, \qquad \ffi\in X.
\end{align*}
Therefore,  for $u_0\in \Dom(L^f)$, equation~\eqref{eq:EvEq-subordinate} is solved by $\Phi(t,L^f)u_0=\Phi^f(t,L)u_0$ according to Theorem~\ref{thm:1}~(ii).
\\
(ii) Since $V\le0$, $(T_t)_{t\geq0}$ is a strongly continuous contraction semigroup and so is $(T_t^f)_{t\geq0}$. It follows from Theorem~\ref{thm:1}~(ii) that $u(t,x):=\Phi(t, (L+V)^{f})u_0$ solves evolution equation~\eqref{eq:EvEq+Subordinate+V} and due to Fubini's theorem
\begin{align*}
\Phi(t, (L+V)^{f})u_0 &= \int_0^\infty T_a^{f} u_0 \mathcal{P}_{A(t)}(da) \\
	&= \int_0^\infty \int_0^\infty \E^x\left[u_0(\xi_s)exp\left(\int_0^s V(\xi_v)dv\right)\right] \mathcal{P}_{\eta_a^f}(ds) \mathcal{P}_{A(t)}(da)\\
	&= \int_0^\infty \E^x\left[u_0(\xi_{\eta_a^f})exp\left(\int_0^{\eta_a^f} V(\xi_v)dv\right)\right] \mathcal{P}_{A(t)}(da) \\
	&= \E^x\left[u_0(\xi_{\eta_{A(t)}^f})exp\left(\int_0^{\eta_{A(t)}^f} V(\xi_s)ds\right)\right].
\end{align*}
\\
(iii) Follows immediately from Theorem~\ref{thm:1}~(iii).

%Set $A(t):=At^\beta$. As in the proof of Corollary~\ref{cor:FKFforMarkovProcesses+V} iii) the claim follows from Theorem~\ref{thm:1} ii) after checking that all parts of Assumption~\ref{ass:Phi} are fulfilled with the given choice of $A(t)$. However this was already shown above.
\end{proof}

\begin{proof}[Proof of Proposition~\ref{prop:CB}]
First, note that, under our assumptions on $\sigma$, the   operator  $(L_{(\sigma,w)},\Dom(L_{(\sigma,w)}))$ does generate a strongly continuous semigroup on $C_\infty(\cR)$ (cf.~\cite{MR3012216}, Sec.~3.1.2).  Second, consider the pair $\left((\xi_t)_{t\geq0},(\PP^x)_{x\in\cR}\right)$ where $(\xi_t)_{t\geq0}$ solves the Stratonovich SDE with respect to a standard $1$-dimensional Brownian motion $(B_t)_{t\geq0}$
\begin{align*}
d\xi_t=\sigma(\xi_s)\circ dB_t+w\sigma(\xi_t)dt
\end{align*}
with $\xi_0=x$ under $\PP^x$. By Remark~5.2.22 in~\cite{MR1121940}, the pair $\left((\xi_t)_{t\geq0},(\PP^x)_{x\in\cR}\right)$ is a Markov process with generator $L_{(\sigma,w)}$. We apply   the Doss-Sussmann technique to find an explicit expression for $(\xi_t)_{t\geq0}$. So, let $(B_t)_{t\geq0}$ be a standard $1$-dimensional Brownian motion with respect to some some probability measure $\PP$. We write $\E[\cdot]$ for the expectation under $\PP$ and  $\E^x[\cdot]$ for the one under $\PP^x$.  Let $g_\sigma$ be as in the statement of Proposition~\ref{prop:CB}. Then, by the It\^{o} formula for the Stratonovich integral
\begin{align*}
g_\sigma(B_t+wt,x)=x+\int_0^t \sigma\big( g_\sigma(B_s+ws,x) \big)\circ dB_s+\int_0^t w\sigma\big( g_\sigma(B_s+ws,x) \big)ds.
\end{align*}
Hence, for every $x\in\cR$,
\begin{align*}
\text{Law}\big( (g_\sigma(B_t+wt,x))_{t\geq0},\PP \big)=\text{Law}\big( (\xi_t)_{t\geq0},\PP^x \big).
\end{align*}
In view of Corollary~\ref{cor:subordinate case} and Example~\ref{example:with drift}, there is a nonnegative random variable $\aA$ (constructed from $k$ and $\gamma$ as in~\eqref{eq:new RV}) which is independent of $(B_t)_{t\geq0}$ and such that 
\begin{align*}
u(t,x)&=\E\left[ u_0\left(g_\sigma\left(B_{\aA t^{\theta/\gamma}}+w\aA t^{\theta/\gamma},x \right)  \right) e^{c\aA t^{\theta/\gamma}} \right]
\end{align*}
solves the evolution equation~\eqref{eq:EvEq-CB}. Note that  $\left( B_{\aA t^{\theta/\gamma}}+w\aA t^{\theta/\gamma}  \right)_{t\geq0}$, conditionally on $\aA$, is a Gaussian process with mean $w\aA t^{\theta/\gamma} $ and variance $\aA t^{\theta/\gamma}$. The process $\left( \sqrt{\aA} B_{t^{\theta/\gamma}}+w\aA t^{\theta/\gamma}  \right)_{t\geq0}$ and, if $H:=\frac{\theta}{2\gamma}\in(0,1)$, the process $\left(\sqrt{\aA} B^H_{t}+w\aA t^{\theta/\gamma}  \right)_{t\geq0}$ have the same conditional law, where $(B^H_t)_{t\geq0}$ is a $1$-dimensional fractional Brownian motion independent of $\aA$.  Hence Feynman-Kac formula~\eqref{eq:FKF-CB+drift} is shown. 
Further, we have
\begin{align*}
&u(t,x)=\E\left[ u_0\left(g_\sigma\left(\sqrt{\aA}B_{t^{\theta/\gamma}}+w\aA t^{\theta/\gamma},x \right)  \right) e^{c\aA t^{\theta/\gamma}} \right]\\
&
=\int_0^\infty\int_\cR u_0\big(g_\sigma(\sqrt{a}z+wat^{\theta/\gamma},x) \big)e^{cat^{\theta/\gamma}}(2\pi t^{\theta/\gamma})^{-1/2}\exp\left( -\frac{z^2}{2t^{\theta/\gamma}} \right)\,dz\,\mathcal{P}_\aA(da)\\
&
=\int_0^\infty\int_\cR u_0\big(g_\sigma(\sqrt{a}y,x) \big)e^{at^{\theta/\gamma}\left(c-w^2/2 \right)+w\sqrt{a}y}(2\pi t^{\theta/\gamma})^{-1/2}\exp\left( -\frac{y^2}{2t^{\theta/\gamma}} \right)\,dy\,\mathcal{P}_\aA(da)\\
&
=\E\left[ u_0\left(g_\sigma\left(\sqrt{\aA}B_{t^{\theta/\gamma}},x \right)  \right) e^{\aA t^{\theta/\gamma}\left(c-\frac{w^2}{2}  \right)+w\sqrt{\aA}B_{t^{\theta/\gamma}}} \right].
\end{align*}
 Hence Feynman-Kac formula~\eqref{eq:FKF-CB-drift} is shown.

\end{proof}

%%%%%%%%%%%%%%%%%%%%%%%%%%%%%%%%%%%%%%%%%%%%%%%%%%%%%%%%%%%%%%%%%%%%%%%%%%%%%%%%%%%%%%%%%%%%%%%%%%

\section*{Acknowlwdgements}
Yana Kinderknecht (Butko) thanks Ren\'e Schilling for a fruitful discussion and interesting references.

%%%%%%%%%%%%%%%%%%%%%%%%%%%%%%%%%%%%%%%%%%%%%%%%%%%%%%%%%%%%%%%%%%%%%%%%%%%%%%%%%%%%%%%%%%%%%%%%%%%%%%%%%%%%%%%
%%%%%%%%%%%%%%%%%%%%%%%%%%%%%%%%%%%%%%%%%%%%%%%%%%%%%%%%%%%%%%%%%%%%%%%%%%%%%%%%%%%%%%%%%%%%%%%%%%%%%%%%%%%%
%%%%%%%%%%%%%%%%%%%%%%%%%%%%%%%%%%%%%%%%%%%%%%%%%%%%%%%%%%%%%%%%%%%%%%%%%%%%%%%%%%%%%%%%%%%%%%%%%%%%%%%%%%%%%%%%%%
%\bibliographystyle{abbrv}

%\bibliography{BBB_2021}

\end{document}